\theoremstyle{plain}
\newtheorem{theorem}{Theorem}[section]
\newtheorem{proposition}{Proposition}[section]
\newtheorem{lemma}{Lemma}[section]
\theoremstyle{definition}
\theoremstyle{remark}
\newtheorem{remark}{Remark}[section]
\title{Macroscopic limit of a bipartite Curie-Weiss model: a dynamical approach}
\author{Francesca Collet\\
{\small Dipartimento di Matematica} \\[-0.2cm]
{\small Alma Mater Studiorum Universit\`a di Bologna}\\[-0.2cm]
{\small Piazza di Porta San Donato 5; 40126 Bologna, Italy} \\[-0.2cm]
{\small e-mail: francesca.collet@unibo.it}}
\date{}
\begin{document}

\maketitle

\begin{abstract}
\noindent We analyze the Glauber dynamics for a bi-populated Curie-Weiss model. We obtain the limiting behavior of the empirical averages in the limit of infinitely many particles. We then characterize the phase space of the model in absence of magnetic field and we show that several phase transitions in the inter-groups interaction strength occur. 

\vspace{0.3cm}

\noindent {\bf Keywords:} Glauber dynamics, interacting particle systems, large deviations, McKean-Vlasov equation, reversible Markov processes, mean-field interaction, phase transition \\ \\
\end{abstract}

\section{Introduction}

Theoretical models based on mean field interacting spin systems, although simplistic, are able to show a good qualitative description of cooperative macroscopic behavior in self-organizing systems. In the last decades, for this reason and their analytical tractability, they have also been applied to social sciences \cite{BrDu01, CoDaPSa10}, finance \cite{FrBa08, DPRST09}, chemistry \cite{DBAgBaBu12} and ecology \cite{VoBaHuMa03, BDPFFM14}.\\
An interesting family within this class, which has naturally emerged in applications, is a multi-species extension of the Curie-Weiss model. The possibility of taking account for several kinds of magnetic spins is a peculiar feature, that may be relevant to capture diverse phenomena from magnetism in anisotropic materials \cite{ScDa97} to socio-economic models \cite{CoGh07}. In this last case, for instance, the possibility of partitioning a population in classes of people sharing the same characteristics yields to a more realistic modeling. In this respect, it is worth to mention a series of papers \cite{CoGaMe08, GaBaCo09}, where this kind of interacting particle systems and techniques coming from statistical mechanics are used not only to describe problems concerning cultural coexistence, immigration and integration in societies, but also to make predictions at the population level. This framework has been tested against experimental data with an overall good agreement \cite{BCSV14}. \\
In this paper, the system under consideration consists of a two-population generalization of the classical mean field Ising model. Thus, on the complete graph two types of spins are present, with an interaction which is homogeneous among sites belonging to the same group, whereas it assumes a different value between sites of distinct populations. To have more intuition it may be helpful to think of the model as an interacting mixture of two Curie-Weiss models. \\
In the equilibrium theory, the thermodynamic limit of the model has been rigorously obtained in \cite{GaCo08}, where also some preliminary results concerning the phase diagram are given. Under mild assumptions on the interaction parameters and the restriction to populations of the same size, an analytic description of the latter is provided in \cite{FeUn12}. A variety of phase transitions is shown.\\
Here, however, we examine in detail some aspects of the dynamics of the model. We start with a Glauber dynamics for the $N$-particle system, where each spin may experience a flip with a rate depending on the gradient of the Hamiltonian felt by the particle. Next we show, via a large deviation approach on path space, that a propagation of chaos result holds and establish the asymptotic dynamics of the pair of group magnetizations in the infinite volume limit. This result corresponds to a law of large numbers and it allows to describe the macroscopic evolution of the system, which is deterministic. We then  give the full phase diagram of the stationary solutions in absence of external fields. In comparison with the results in \cite{FeUn12}, we keep complete arbitrariness of the interaction parameters and, moreover, we relax the symmetry hypothesis on the relative proportion of the groups.

Although the model itself is not a novelty in literature and its static analysis is well understood, to our knowledge we provide the first attempt of dynamical description of the time evolution of such system, obtaining also some non-equilibrium properties. \\ 
The outline of the paper is as follows. In Section~\ref{sct:Model-Results} we illustrate the model and exact statements of the results are presented. Section~\ref{sct:Proofs} is devoted to the proofs of the main theorems stated in Section~\ref{sct:Model-Results}. A Conclusions section concludes the paper.

\section{Model and main results}\label{sct:Model-Results}

Let $\mathscr{S}=\{-1,+1\}$ and $h=(h_j)_{j=1}^N \in \mathbb{R}^N$ be a sequence of real numbers. Given a configuration $\sigma=(\sigma_j)_{j=1}^N \in \mathscr{S}^N$ of a $N$-spin system and a realization of the magnetic field $h$, we can define the Hamiltonian $H_N(\sigma):  \mathscr{S}^{N}   \longrightarrow  \mathbb{R}$ as
\begin{equation}\label{Hamiltonian}   
  H_N(\sigma)=-\frac{1}{2N}\sum_{j,k=1}^N J_{jk} \sigma_j \sigma_k - \sum_{j=1}^N h_j \sigma_j  \,,
\end{equation}
where $\sigma_j$ is the spin value at site $j$ and $h_j$ the local magnetic field associated with the same site. Let $J_{jk}$, real parameter, represent the strength of the interaction between sites $j$ and $k$. Without loss of generality, we restrict to the case of symmetric matrices $\mathbb{J} = (J_{jk})_{1 \leq j,k \leq N}$. \\
We divide the whole system of size $N$ into two disjoint subsystems of sizes $N_1$ and $N_2$ respectively. Let $I_1$ (resp. $I_2$) be the set of sites belonging to the first (resp. second) subsystem. We have $\vert I_1 \vert = N_1$ and $\vert I_2 \vert = N_2$, with $N_1 + N_2 = N$. To fix notation, let $1, 2, \dots, N_1$ be the indeces corresponding to particles in population $I_1$ and $N_1+1, N_1+2, \dots, N$ those of particles in population $I_2$, so that
\[ 
\begin{array}{rc|c}
& \mbox{\scriptsize Population $I_1$} & \mbox{\scriptsize Population $I_2$} \\
\sigma = & ( \sigma_1, \sigma_2, \dots, \sigma_{N_1} & \sigma_{N_1+1}, \sigma_{N_1+2}, \dots, \sigma_{N} ) 
\end{array}
\] 
Given two spins $\sigma_j$ and $\sigma_k$, their mutual interaction $J_{jk}$ depends on the subsystems they belong to, as specified by the following matrix
\[
\mathbb{J} =
\begin{array}{rccc}
\left(
\begin{array}{c|ccc}
\begin{smallmatrix} \mathbb{J}_{11} \end{smallmatrix} & & \begin{smallmatrix} \mathbb{J}_{12} \end{smallmatrix} \\
\hline
&&&\\
\begin{smallmatrix} \mathbb{J}_{21} \end{smallmatrix} & & \begin{smallmatrix} \mathbb{J}_{22} \end{smallmatrix} & \\
&&&
\end{array}
\right) 
\end{array}
\]
where $\mathbb{J}_{jj}$ is a $N_j \times N_j$ square block comprised by constant elements $J_{jj}$ tuning the interaction within sites of the same subsystem and $\mathbb{J}_{jk} = \mathbb{J}^{\top}_{kj}$ is a $N_j \times N_k$ rectangular block with constant entries $J_{jk}$ controlling the interaction of spins located in different subystems. We assume $J_{11}$ and $J_{22}$ positive; while, $J_{12}$ can be either positive or negative allowing both ferromagnetic and antiferromagnetic interactions.\\
Analogously, the field $h_j$ can take only the two values $h_1$ or $h_2$, depending on the subset containing $\sigma_j$, according to the vector
\[
h =
\left(
\begin{array}{c}
h_1 \\
\vdots\\
h_1\\
\hline
h_2\\
\vdots\\
h_2
\end{array}
\right) 
\hspace{-0.5cm}
\begin{array}{ll}
\left.
\begin{array}{l}
\\
\phantom{\vdots}\\
\\
\end{array}
\right\}
& N_1 \\
\left.
\begin{array}{l}
\\
\phantom{\vdots}\\
\\
\end{array}
\right\}
& N_2
\end{array} 
\]
We introduce the magnetization of a subset $S$ as
\[
m_{\vert S \vert} (\sigma) = \frac{1}{\vert S \vert} \sum_{j \in S} \sigma_j \,.
\]
Moreover, we denote by $\mathbf{m}_N(\sigma) = \left( m_{N_1} (\sigma), m_{N_2} (\sigma) \right)$ the vector whose entries are the magnetizations of population $I_1$ and $I_2$, respectively. By defining $\alpha:= \frac{N_1}{N}$ the proportion of sites belonging to the first group, we can immediately rewrite the Hamiltonian \eqref{Hamiltonian} as
\begin{multline}\label{Hamiltonian:bipop}
H_N(\sigma) = - \frac{N}{2} \left[ \alpha^2 J_{11} \left( m_{N_1} (\sigma) \right)^2 + 2 \alpha (1-\alpha) J_{12} m_{N_1} (\sigma) m_{N_2} (\sigma) + (1-\alpha)^2 J_{22} \left( m_{N_2} (\sigma) \right)^2\right] \\
- N \alpha h_1 m_{N_1} (\sigma) - N (1-\alpha) h_2 m_{N_2} (\sigma)\,.
\end{multline}

Let us define the dynamics we consider. The stochastic process $(\sigma(t))_{t \geq 0}$ is described as follows. Let $\sigma^i$ denote the configuration obtained from $\sigma$ by flipping the $i$-th spin. The spins will be assumed to evolve with Glauber one spin-flip dynamics: at any time $t$, the system may experience a transition 
\[
\sigma \longrightarrow \sigma^i 
\quad \mbox{ at rate } \quad 
\left\{
\begin{array}{ll}
 e^{-\sigma_i \left[ R_1 \left(  \mathbf{m}_N(\sigma) \right) + h_1 \right]}, & \mbox{ if } i \in I_1\\
e^{-\sigma_i \left[ R_2 \left(  \mathbf{m}_N(\sigma) \right) + h_2\right]}, & \mbox{ if } i \in I_2 \,,
\end{array}
\right.
\] 
where
\begin{equation}\label{Def:R}
R_1 \left(  \mathbf{x} \right) = \alpha J_{11} x_1 + (1-\alpha) J_{12} x_2 \quad \mbox{ and } \quad R_2 \left(  \mathbf{x} \right) = \alpha J_{12} x_1 + (1-\alpha) J_{22} x_2 \,.
\end{equation}
Not to clutter our notation, we omit subscripts indicating the dependence of functions $R_i$'s ($i=1,2$) on $\alpha$ and $\mathbb{J}$.\\

Formally, we are considering a continuous time Markov chain on $\mathscr{S}^N$, with infinitesimal generator $L_N$ acting on functions $f:\mathscr{S}^N \longrightarrow \mathbb{R}$ as follows:
\begin{equation}\label{Generator:Micro}
 L_Nf(\sigma)=\sum_{j \in I_1} e^{-\sigma_j \left[ R_1 \left(  \mathbf{m}_N(\sigma) \right) + h_1 \right]}\nabla^{\sigma}_j f(\sigma) 
+ \sum_{j \in I_2} e^{-\sigma_j \left[ R_2 \left(  \mathbf{m}_N(\sigma) \right) + h_2\right]}\nabla^{\sigma}_j  f(\sigma),
\end{equation}
where $\nabla^{\sigma}_j f(\sigma)=f(\sigma^j)-f(\sigma)$.\\

\begin{remark}
The dynamics \eqref{Generator:Micro} is reversible with respect to the stationary distribution 
\begin{equation}\label{Stat:Distr}
\mu_N(\sigma) = \frac{\exp[-H_N(\sigma)]}{Z_N} \,,
\end{equation}
with $Z_N$ normalizing factor depending on $\alpha$, $\mathbb{J}$ and $h$.
\end{remark}

For simplicity, the initial condition $\sigma(0)$ is assumed to have product distribution $\lambda^{\otimes N}$, with $\lambda$ probability measure on $\mathscr{S}$. The quantity $(\sigma_j(t))_{t \in [0,T]}$ represents the time evolution on $[0,T]$ of $j$-th spin value. The space of all these paths is $\mathscr{D}[0,T]$, which is the space of the right-continuous, piecewise-constant functions from $[0,T]$ to $\mathscr{S}$. We endow $\mathscr{D}[0,T]$ with the Skorohod topology, which provides a metric and a Borel $\sigma$-field (see~\cite{EtKu86}).


\paragraph{Result I: Infinite Volume Dynamics.} We now derive the dynamics of the process \eqref{Generator:Micro}, in the limit as $N \longrightarrow +\infty$, in a fixed time interval $[0,T]$, via a large deviations approach. 

\begin{remark}
Notice that, since $N_1 = \alpha N$ and $N_2 = (1-\alpha) N$, in the limit as $N \longrightarrow +\infty$, $N_1$ and $N_2$ grow to infinity with the same speed as $N$.
\end{remark}

Let $\sigma[0,T] \in (\mathscr{D}[0,T])^N$ denote a path of the system in the time interval $[0,T]$, with $T$ positive and fixed. If $f: \mathscr{S} \longrightarrow \mathbb{R}$, we are interested in the asymptotic (as $N \longrightarrow +\infty$) behavior of the pair of \emph{empirical averages} 
\[ 
\left( \int f d\rho_1 (t), \int f d\rho_2 (t) \right) := \left( \frac{1}{N_1} \sum_{j \in I_1} f(\sigma_j(t)), \frac{1}{N_2} \sum_{j \in I_2} f(\sigma_j(t)) \right) \,,
\]
where $\boldsymbol{\rho}_N (t) := (\rho_1 (t), \rho_2 (t))_{t \in [0,T]}$ is the flow of the pair of \emph{empirical measures}
\[ 
(\rho_1 (t), \rho_2 (t)) = \left( \frac{1}{N_1} \sum_{j \in I_1} \delta_{\sigma_j (t)}, \frac{1}{N_2} \sum_{j \in I_2} \delta_{\sigma_j (t)} \right) \,.
\]
We may think of $\boldsymbol{\rho}_N$ as a random element of $\mathscr{M}_1(\mathscr{D}[0,T]) \times \mathscr{M}_1(\mathscr{D}[0,T])$, where $\mathscr{M}_1(\mathscr{D}[0,T])$ is the space of probability measures on $\mathscr{D}[0,T]$ endowed with the weak convergence topology. \\
First of all, we want to determine the weak limit of $\boldsymbol{\rho}_N$ in $\mathscr{M}_1(\mathscr{D}[0,T]) \times \mathscr{M}_1(\mathscr{D}[0,T])$ as $N$ grows to infinity. It corresponds to a law of large numbers with the limit being a deterministic measure. Being an element of $\mathscr{M}_1(\mathscr{D}[0,T]) \times \mathscr{M}_1(\mathscr{D}[0,T])$, such a limit can be viewed as a stochastic process, which describes the dynamics of the system in the infinite volume limit.\\
The result we actually prove is a \emph{large deviation principle} for the distribution of $\boldsymbol{\rho}_N$. To state it properly we need some more notation.\\

Consider $\boldsymbol{Q} = (Q_1,Q_2) \in \mathscr{M}_1(\mathscr{D}[0,T]) \times \mathscr{M}_1(\mathscr{D}[0,T])$, if $\boldsymbol{Q}(t) = (Q_1(t), Q_2(t))$ indicates the marginal distribution of $\boldsymbol{Q}$ at time $t$, then we denote
\[
\mathbf{m}_{\boldsymbol{Q}(t)} ( \eta ) = \left( \int_{\mathscr{S}} \eta \, Q_1 (t; d\eta), \int_{\mathscr{S}} \eta \, Q_2 (t; d\eta)\right) \,.
\]
We will often use this notation in the rest of the paper. Besides, for a given path $\eta[0,T] \in \mathscr{D}[0,T]$ (observe that it is a single spin trajectory), we define 
\begin{equation}\label{F}
F(\boldsymbol{Q}) := \alpha F_1 (Q_1) + (1-\alpha) F_2 (Q_2)
\end{equation}
with
\begin{subequations}\label{F1-2}
\begin{multline}
F_1(Q_1) := \int  \bigg\{  \int_0^T  \left( 1 - e^{-\eta(t) \left[ R_1 \left( \mathbf{m}_{\boldsymbol{Q}(t)} (\eta) \right) + h_1\right]}\right)dt + \frac{\eta(T)}{2} \left[ R_1 \left( \mathbf{m}_{\boldsymbol{Q}(T)} (\eta) \right) + h_1 \right] \\
- \frac{\eta(0)}{2} \left[ R_1 \left( \mathbf{m}_{\boldsymbol{Q}(0)} (\eta) \right) + h_1 \right] \bigg\} dQ_1 
\end{multline}
and
\begin{multline}
F_2(Q_2) := \int  \bigg\{  \int_0^T  \left( 1 - e^{-\eta(t) \left[ R_2 \left( \mathbf{m}_{\boldsymbol{Q}(t)} (\eta) \right) + h_2\right]}\right)dt + \frac{\eta(T)}{2} \left[ R_2 \left( \mathbf{m}_{\boldsymbol{Q}(T)} (\eta) \right) + h_2 \right] \\
- \frac{\eta(0)}{2} \left[ R_2 \left( \mathbf{m}_{\boldsymbol{Q}(0)} (\eta) \right) + h_2 \right] \bigg\} dQ_2  \,.
\end{multline}
\end{subequations}
If $P_N$ is the law of $\sigma[0,T] \in (\mathscr{D}[0,T])^N$, the process with infinitesimal generator \eqref{Generator:Micro} and initial distribution $\lambda^{\otimes N}$, then let $\mathcal{P}_N$ be the law of $\boldsymbol{\rho}_N (\sigma[0,T])$ under $P_N$, i.e. $\mathcal{P}_{N} (\cdot) := P_{N} \{ \boldsymbol{\rho}_N \in \cdot \}$. \\
A special case is when all the sites are independent from each other (absence of interaction) and the spin sign change with constant rate equal to $1$. We denote by $W$ the marginal law on $\mathscr{D}[0,T]$ of this process.\\
In what follows, for every $\pi_1, \pi_2 \in \mathscr{M}_1(\mathscr{D}[0,T])$, the quantity
\[
H(\pi_1 \vert \pi_2) := \left \{
\begin{array}{ll}
\int d\pi_1 \log \frac{d\pi_1}{d\pi_2} & \mbox{if} \quad \pi_1 \ll \pi_2 \quad \mbox{and} \quad \log \frac{d\pi_1}{d\pi_2} \in L^1(\pi_1)\\
+\infty & \mbox{otherwise}
\end{array}
\right.
\]
will represent the relative entropy between $\pi_1$ and $\pi_2$. 

\begin{theorem}\label{thm:LDP}
The laws $\{\mathcal{P}_N\}_{N \geq 1}$ of $\boldsymbol{\rho}_N$ obey a Large Deviation Principle with good rate function 
\begin{equation}\label{RateFct_PN}
\mathcal{I}(\boldsymbol{Q}) := \alpha H(Q_1 \vert W) + (1-\alpha) H(Q_2 \vert W) - F(\boldsymbol{Q})  \,.
\end{equation}
\end{theorem}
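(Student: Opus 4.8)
The plan is to derive the principle by a change of measure against the non-interacting reference, combining Sanov's theorem with a Girsanov computation and a tilting (Varadhan) argument. Let $W^{\otimes N}$ denote the law on $(\mathscr{D}[0,T])^N$ of $N$ independent spins, each flipping at constant rate $1$ and started from $\lambda$, i.e.\ the $N$-fold product of $W$. First I would record the reference LDP: under $W^{\otimes N}$ the coordinates split into two independent i.i.d.\ blocks of sizes $N_1=\alpha N$ and $N_2=(1-\alpha)N$, so Sanov's theorem on the Polish space $\mathscr{D}[0,T]$ (Skorohod) gives, at speed $N$, a joint LDP for $\boldsymbol{\rho}_N=(\rho_1,\rho_2)$ with good rate function $\mathcal{I}_0(\boldsymbol{Q})=\alpha H(Q_1\vert W)+(1-\alpha)H(Q_2\vert W)$; the two contributions add by independence, and $H(\cdot\vert W)$ is a good rate function on $\mathscr{M}_1(\mathscr{D}[0,T])$.

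Next I would compute the Radon--Nikodym derivative $dP_N/dW^{\otimes N}$. Since $P_N$ and $W^{\otimes N}$ are laws of pure-jump processes sharing the initial law $\lambda^{\otimes N}$, and the reference intensities are all equal to $1$, Girsanov's formula for a change of jump intensity yields, writing $G_i(s)=R_i(\mathbf{m}_N(s))+h_i$ for the field felt by a spin of group $i$ (cf.\ \eqref{Def:R}, \eqref{Generator:Micro}),
\[
\log\frac{dP_N}{dW^{\otimes N}}(\sigma)=\sum_{i\in I_1}\left[\int_0^T\big(1-e^{-\sigma_i(s)G_1(s)}\big)\,ds-\sum_{s:\,\Delta\sigma_i(s)\neq 0}\sigma_i(s_-)\,G_1(s_-)\right]+\cdots,
\]
the dots standing for the same expression summed over $i\in I_2$ with $G_1$ replaced by $G_2$. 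The crucial step is to recast the right-hand side as $N\,F(\boldsymbol{\rho}_N)+o(N)$ with $F$ as in \eqref{F}--\eqref{F1-2}. The integral terms already reproduce, after dividing by $N$, the $\alpha\int(\cdots)\,dQ_1+(1-\alpha)\int(\cdots)\,dQ_2$ part. For the jump sums I would use that at a flip $\sigma_i(s_-)=-\tfrac12\Delta\sigma_i(s)$, so each sum becomes $\tfrac12\int_0^T G_i(s_-)\,d\sigma_i(s)$; integrating by parts in time produces, after averaging against $\rho_i$, the boundary contributions $\tfrac12[\eta(T)G_i(T)-\eta(0)G_i(0)]$ appearing in \eqref{F1-2}, while the remaining $-\tfrac12\int_0^T m_{N_i}\,dR_i$ collapses, by the linearity of the $R_i$ and the symmetry $J_{12}=J_{21}$, to a further boundary term governed by the mean-field energy. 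The quadratic-covariation remainders from the integration by parts are $O(1/N)$, since each spin flips $O(1)$ times and each magnetization jumps by $O(1/N)$, hence negligible after dividing by $N$.

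Finally I would transfer the LDP through the tilt. With $F$ bounded and continuous on $\mathscr{M}_1(\mathscr{D}[0,T])\times\mathscr{M}_1(\mathscr{D}[0,T])$, and $\mathcal{P}_N$ the image under $\boldsymbol{\rho}_N$ of $e^{NF(\boldsymbol{\rho}_N)+o(N)}\,dW^{\otimes N}$, the standard LDP for tilted measures (a consequence of Varadhan's lemma) gives that $\{\mathcal{P}_N\}$ obeys an LDP with good rate $\mathcal{I}_0-F-\inf(\mathcal{I}_0-F)$. Since $P_N$ is a probability measure, $\tfrac1N\log\int e^{NF(\boldsymbol{\rho}_N)}\,dW^{\otimes N}\to 0$, whence $\inf(\mathcal{I}_0-F)=0$ and the rate reduces to $\mathcal{I}=\mathcal{I}_0-F$ of \eqref{RateFct_PN}; goodness is inherited from that of $\mathcal{I}_0$ since $F$ is bounded.

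The main obstacle is precisely the identification step: establishing $\tfrac1N\log(dP_N/dW^{\otimes N})=F(\boldsymbol{\rho}_N)+o(1)$ in a sense strong enough (superexponentially in $N$) to survive the Laplace--Varadhan passage, together with the weak continuity of $F$. The latter is delicate because the time-marginal maps $\boldsymbol{Q}\mapsto\mathbf{m}_{\boldsymbol{Q}(t)}$ and the endpoint evaluations $\eta\mapsto\eta(0),\eta(T)$ are discontinuous on $\mathscr{D}[0,T]$ exactly at jump times; I would address this by restricting to measures charging only paths continuous at the relevant times and approximating. I would also cross-check the bookkeeping of the boundary/self-energy terms produced in the integration by parts by verifying that the resulting $\mathcal{I}$ is nonnegative and vanishes exactly at the law of the limiting McKean--Vlasov process, which is the natural consistency test pinning down their coefficients.
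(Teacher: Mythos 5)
Your overall architecture coincides with the paper's: Sanov's theorem for the two independent blocks under the product reference law (the paper's Proposition~3.1), a Girsanov computation identifying $dP_N/dW^{\otimes N}$ as $\exp\left[NF(\boldsymbol{\rho}_N)+O(1)\right]$ (the paper's Lemma~3.1), and transfer of the LDP by exponential tilting via Varadhan's lemma. Your normalization argument for the additive constant is fine, and your flagging of the weak-continuity issues of $F$ on $\mathscr{D}[0,T]$ is a genuine technical point that the paper passes over in silence.

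The gap is in the identification step, and it is quantitative, not cosmetic. Your integration by parts produces, besides the boundary terms $\tfrac12\left[\eta(T)G_i(T)-\eta(0)G_i(0)\right]$ of \eqref{F1-2}, the leftover $-\tfrac{N_i}{2}\int_0^T m_{N_i}(t_-)\,dR_i\left(\mathbf{m}_N(t)\right)$ from each population. You assert that this leftover ``collapses to a further boundary term governed by the mean-field energy'' and, simultaneously, that the whole exponent equals $NF(\boldsymbol{\rho}_N)+o(N)$ with $F$ exactly as in \eqref{F}--\eqref{F1-2}. These two claims are incompatible: summing over the two populations and using $J_{12}=J_{21}$, the leftover equals
\[
-\frac{N}{4}\Bigl[\alpha^2 J_{11}\, m_{N_1}^2(t) + 2\alpha(1-\alpha)J_{12}\, m_{N_1}(t)\,m_{N_2}(t) + (1-\alpha)^2 J_{22}\, m_{N_2}^2(t)\Bigr]_{t=0}^{t=T} + O(1)\,,
\]
an order-$N$ quantity that neither vanishes nor is $o(N)$. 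So either you absorb it into the rate function---in which case the interaction ($R_i$) part of the boundary terms enters with coefficient $\tfrac14$ rather than the $\tfrac12$ displayed in \eqref{F1-2}---or your derivation does not close; as written, it proves neither the claimed identity nor the theorem. The consistency test you yourself propose is exactly what settles the coefficient: with $\tfrac14$, the resulting $\mathcal{I}$ coincides with $\alpha H(Q_1\vert P^{\boldsymbol{Q}}_1)+(1-\alpha)H(Q_2\vert P^{\boldsymbol{Q}}_2)$ (the representation exploited in the paper's Section~3.2) and vanishes precisely at the McKean--Vlasov law, whereas keeping the boundary terms literally as in \eqref{F1-2} one finds $\mathcal{I}(\boldsymbol{Q}^*)$ equal to minus one quarter of the bracket above evaluated along the McKean--Vlasov flow, which is generically nonzero (and can be negative)---impossible for the rate function of an LDP. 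So carry the bookkeeping to the end and state the identification with the corrected boundary terms; this is exactly the step the paper delegates to the reversible-spin-flip identity of~\cite{DaPdHo96} (Lemma~3), and your finding flags a factor that deserves a second look there as well. Note finally that the tilting step is insensitive to this correction, since the leftover is a bounded function of the time-$0$ and time-$T$ marginals of $\boldsymbol{\rho}_N$ alone, so the structure of your argument (and of the paper's) survives once $F$ is fixed.
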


The proof of Theorem \ref{thm:LDP}, as well as of the other results stated in this section, is postponed to Section \ref{sct:Proofs}. \\
We recall that the above statement means that, for every $C, O \subseteq \mathscr{M}_1(\mathscr{D}[0,T]) \times \mathscr{M}_1(\mathscr{D}[0,T])$ respectively closed and open for the weak topology, we have
 \begin{subequations}
\begin{equation}\label{LDPc}
\limsup_{N \to +\infty} \frac{1}{N} \log \mathcal{P}_N (C) \leq - \inf_{\boldsymbol{Q} \in C} \mathcal{I} (\boldsymbol{Q})
\end{equation}
\begin{equation}\label{LDPo}
\liminf_{N \to +\infty} \frac{1}{N} \log \mathcal{P}_N (O) \geq - \inf_{\boldsymbol{Q} \in O} \mathcal{I} (\boldsymbol{Q}) \,.
\end{equation} 
\end{subequations}
Besides, the function $\mathcal{I} ( \cdot )$ is nonnegative, lower semi-continuous and the level sets $\left\{ \boldsymbol{Q} : \mathcal{I} ( \boldsymbol{Q} ) \leq k \right\}$ are compact, for every positive $k$. See~\cite{DeZe93} for more details. \\
The large deviation principle allows to characterize the unique limit $\boldsymbol{Q}^*$ of the sequence $\left\{ \boldsymbol{\rho}_N \right\}_{N \geq 1}$ and, in particular, makes possible to provide a Fokker-Planck equation useful to describe the time evolution of such limiting probability measure.

\begin{theorem}\label{thm:MKV}
Suppose that the initial distribution of the Markov process $(\sigma(t))_{t\geq 0}$ with generator \eqref{Generator:Micro} is such that the random variables $(\sigma_j(0))_{j=1}^{N}$ are independent and identically distributed with law $\lambda$. Then the equation $\mathcal{I}(\boldsymbol{Q})=0$ admits a unique solution $\boldsymbol{Q}^* \in \mathscr{M}_1(\mathscr{D}[0,T]) \times \mathscr{M}_1(\mathscr{D}[0,T])$, such that its time marginal $\boldsymbol{q}(t) := \boldsymbol{Q}^*(t) = \left( q_1(t), q_2(t) \right) \in \mathscr{M}_1(\mathscr{S}) \times \mathscr{M}_1(\mathscr{S})$ have components which are weak solutions of the nonlinear McKean-Vlasov equation
\begin{equation}\label{MKV1}
    \begin{array}{l}
        \partial_t q_i(t) = \mathcal{L}_i q_i(t) \\
         q_i(0) = \lambda 
    \end{array}
\qquad (t \in [0,T]; i=1,2)
\end{equation}
where
\[
\mathcal{L}_i q_i (t; \eta) = \nabla^{\eta} \left\{ e^{-\eta \left[ R_i \left( \mathbf{m}_{\boldsymbol{q}(t)}(\eta) \right) + h_i \right]} q_i (t; \eta)  \right\}  \,.
\]
with $\eta \in \mathscr{S}$ and the $R_i$'s defined by \eqref{Def:R}.
Moreover, with respect to a metric $d(\cdot, \cdot)$ inducing the weak topology, $\boldsymbol{\rho}_N \longrightarrow  \boldsymbol{Q}^*$ in probability with exponential rate, i.e. $\mathcal{P}_N \left\{ d \left( \boldsymbol{\rho}_N,\boldsymbol{Q}^* \right) > \varepsilon \right\}$ is exponentially small in $N$, for each $\varepsilon>0$.
\end{theorem}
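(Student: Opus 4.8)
The plan is to reduce both claims to the variational structure of the rate function \eqref{RateFct_PN}. First I would recast $\mathcal I$ as a sum of two relative entropies. For a fixed $\boldsymbol Q$, let $\Gamma_i(\boldsymbol Q)$ be the law on $\mathscr D[0,T]$, with initial marginal $\lambda$, of the time-inhomogeneous one-spin jump process whose flip rate is $e^{-\eta[R_i(\mathbf m_{\boldsymbol Q(t)}(\eta))+h_i]}$, the $R_i$ being those of \eqref{Def:R}; these rates are \emph{frozen}, because $\mathbf m_{\boldsymbol Q(t)}$ depends only on the marginals of $\boldsymbol Q$ and not on the sampled trajectory. Applying the Girsanov formula for pure-jump processes relative to the unit-rate reference $W$, the log-density $\log(d\Gamma_i(\boldsymbol Q)/dW)$ is the sum of an absolutely continuous part $\int_0^T(1-e^{-\eta(t)[R_i+h_i]})\,dt$ and a jump part; a discrete integration by parts over the jump times rewrites the latter as the boundary contributions appearing in \eqref{F1-2}, so that $\int \log(d\Gamma_i(\boldsymbol Q)/dW)\,dQ_i=F_i(Q_i)$. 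This gives $\mathcal I(\boldsymbol Q)=\alpha H(Q_1\mid\Gamma_1(\boldsymbol Q))+(1-\alpha)H(Q_2\mid\Gamma_2(\boldsymbol Q))$, whence nonnegativity is immediate and, decisively, $\mathcal I(\boldsymbol Q)=0$ exactly when $Q_i=\Gamma_i(\boldsymbol Q)$ for $i=1,2$, i.e. when $\boldsymbol Q$ is a fixed point of the mean-field map.

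I would then identify this fixed-point condition with \eqref{MKV1}. If $Q_i=\Gamma_i(\boldsymbol Q)$, then $Q_i$ is a Markov jump law whose time-marginals $q_i(t)=Q_i(t)$ satisfy the Kolmogorov forward equation of the generator in \eqref{MKV1}, and conversely a solution of \eqref{MKV1} rebuilds such a law. Since $\mathscr S=\{-1,+1\}$, each $q_i(t)$ is determined by its magnetization $x_i(t)=\int_{\mathscr S}\eta\,q_i(t;d\eta)$; testing \eqref{MKV1} against $f(\eta)=\eta$ closes the dynamics into the planar system $\dot x_i=2\sinh(R_i(\mathbf x)+h_i)-2x_i\cosh(R_i(\mathbf x)+h_i)$ for $i=1,2$, with $\mathbf x=(x_1,x_2)$, $R_i$ as in \eqref{Def:R} and $x_i(0)=\int_{\mathscr S}\eta\,\lambda(d\eta)$. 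The vector field is real-analytic, hence locally Lipschitz, and on $\partial([-1,1]^2)$ it points strictly inward (at $x_i=1$ it equals $-2e^{-(R_i+h_i)}<0$, at $x_i=-1$ it equals $2e^{(R_i+h_i)}>0$); thus $[-1,1]^2$ is forward invariant and Cauchy--Lipschitz furnishes a unique global solution on $[0,T]$.

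With the flow $(x_1,x_2)$ uniquely determined, the field $\mathbf m_{\boldsymbol q(t)}$ entering the rates is a fixed bounded continuous function of $t$, so each $\Gamma_i(\boldsymbol Q)$ is now the law of a genuine linear time-inhomogeneous Markov jump process with bounded rates---uniquely specified by its well-posed martingale problem, the two populations being independent given the field. Hence the fixed-point equation admits the single solution $\boldsymbol Q^*$, whose marginals are the unique weak solution of \eqref{MKV1}; this proves that $\mathcal I$ has exactly one zero.

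The exponential convergence then follows from Theorem \ref{thm:LDP}. For $\varepsilon>0$ the set $C_\varepsilon=\{\boldsymbol Q:\,d(\boldsymbol Q,\boldsymbol Q^*)\ge\varepsilon\}$ is closed; since $\mathcal I$ is a good rate function its infimum over $C_\varepsilon$ is attained on a compact level set, and because $\boldsymbol Q^*$ is the unique zero and lies outside $C_\varepsilon$ this infimum $c_\varepsilon$ is strictly positive. The upper bound \eqref{LDPc} yields $\limsup_{N}\frac1N\log\mathcal P_N(C_\varepsilon)\le-c_\varepsilon<0$, so $\mathcal P_N\{d(\boldsymbol\rho_N,\boldsymbol Q^*)>\varepsilon\}$ is exponentially small and, in particular, $\boldsymbol\rho_N\to\boldsymbol Q^*$ in probability. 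I expect the main obstacle to be the relative-entropy representation of the first paragraph: making the Girsanov change of measure and the integration by parts over the jump times reproduce exactly the boundary terms of \eqref{F1-2} is delicate---it needs the mild regularity of $t\mapsto\mathbf m_{\boldsymbol Q(t)}$ that legitimises the by-parts step, together with the reversible structure of \eqref{Generator:Micro} and the precise form of \eqref{Hamiltonian:bipop}. By comparison, the invariance of the square and the uniqueness for the martingale problem are routine.
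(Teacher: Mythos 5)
Your proposal is correct and follows essentially the same route as the paper's proof: the same relative-entropy representation $\mathcal{I}(\boldsymbol{Q}) = \alpha H(Q_1 \vert \Gamma_1(\boldsymbol{Q})) + (1-\alpha) H(Q_2 \vert \Gamma_2(\boldsymbol{Q}))$ with respect to the frozen mean-field jump laws (your $\Gamma_i(\boldsymbol{Q})$ are exactly the paper's $P_i^{\boldsymbol{Q}}$, obtained by the same Girsanov-plus-reversibility computation), the same fixed-point characterization of the zeros of $\mathcal{I}$, uniqueness via local-Lipschitz well-posedness of the finite-dimensional magnetization equation, and the same goodness-of-rate-function plus LDP upper-bound argument for the exponential convergence of $\boldsymbol{\rho}_N$. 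The only minor deviation is the existence of the zero, which you establish constructively through forward invariance of $[-1,1]^2$ for the planar ODE, whereas the paper deduces it abstractly from the fact that a rate function governing a large deviation principle must vanish somewhere.
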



\paragraph{Result II: Phase Diagram.} The equation \eqref{MKV1} describes the behavior of the system governed by generator \eqref{Generator:Micro} in the infinite volume limit. We are interested in the detection of the \hbox{$t$-stationary} solutions of this equation and in the analysis of their stability to get a full understanding of the phase diagram.\\ 
First of all, it is convenient to reformulate the McKean-Vlasov equation \eqref{MKV1} in terms of the expectations $m_{q_1(t)}$ and $m_{q_2(t)}$ defined as follows:
\[
 m_{q_i(t)} := \sum_{\eta \in \mathscr{S}} \eta \, q_i ( t; \eta ) \quad \mbox{ for } i = 1, 2.
\]
In the sequel, by abuse of notation we write $m_i(t)$ instead of $m_{q_i(t)}$, for $i = 1, 2$, and we denote by $\mathbf{m}(t) = \left( m_1(t), m_2(t) \right)$.\\
To rewrite \eqref{MKV1} in terms of the new variables $m_1(t)$ and $m_2(t)$, observe that
\[
\dot{m}_1(t) =   \sum_{\eta \in \mathscr{S}} \eta \, \dot{q}_1 (t; \eta) = \sum_{\eta \in \mathscr{S}} \eta \, \mathcal{L}_1 q_1 (t; \eta) \,.
\]
On the other hand, a straightforward computation yields that
\[
\sum_{\eta \in \mathscr{S}} \eta \, \mathcal{L}_1 q_1 (t; \eta) = 
2 \sinh \left[ R_1 \left( \mathbf{m}(t) \right) + h_1\right] - 2 m_1(t) \cosh \left[R_1 \left( \mathbf{m}(t) \right) + h_1 \right]\,,
\]
giving
\[
\dot{m}_1(t) = 2 \sinh \left[ R_1 \left( \mathbf{m}(t) \right) + h_1\right] - 2 m_1(t) \cosh \left[R_1 \left( \mathbf{m}(t) \right) + h_1 \right] \,.
\]
Similarly, we can obtain an equation for $m_2(t)$, so that it is proved that \eqref{MKV1} can be rewritten as
\begin{equation}\label{MKV2}
\begin{array}{lcl}
\dot{m}_1(t) & = & 2 \sinh \left[ R_1 \left( \mathbf{m}(t) \right) + h_1 \right] - 2 m_1(t) \cosh \left[ R_1 \left( \mathbf{m}(t) \right) + h_1\right] \\     
\dot{m}_2(t) & = & 2 \sinh \left[ R_2 \left( \mathbf{m}(t) \right) + h_2\right] - 2 m_2(t) \cosh \left[ R_2 \left(\mathbf{m}(t) \right) + h_2\right] \,,
\end{array}
\end{equation}
with initial condition $m_1(0) = m_2(0)= m_{\lambda}$.


The following theorem is concerned with equilibria of equation \eqref{MKV2}, in the case of absence of magnetic fields. In particular, it is shown that the system undergoes several phase transitions depending on the parameters.

\begin{theorem}\label{thm:PhDia}
Consider the system \eqref{MKV2} with $h_1=h_2=0$ and fix $\alpha > \frac{1}{2}$. Moreover, set 
\[
J_{12}^{(c)} := \sqrt{\frac{(1-\alpha J_{11})(1-(1-\alpha) J_{22})}{\alpha (1-\alpha)}} \,.
\]
\begin{enumerate}
\item  
If $\alpha J_{11} \leq 1$ and $(1-\alpha) J_{22} \leq 1$, then
\begin{enumerate}
\item 
for $\vert J_{12} \vert \leq J_{12}^{(c)}$ the equation \eqref{MKV2} admits a unique equilibrium solution $\mathbf{m}^{(0)}$. If $\vert J_{12} \vert < J_{12}^{(c)}$ it is linearly stable, while if $\vert J_{12} \vert = J_{12}^{(c)}$ the linearized system has a neutral direction. 
\item 
for $\vert J_{12} \vert > J_{12}^{(c)}$ the point $\mathbf{m}^{(0)}$ is still an equilibrium for \eqref{MKV2}, but it is a saddle point for the linearized system. Moreover, \eqref{MKV2} has two linearly stable stationary solutions $\pm \mathbf{m}^{(1)}$.
\end{enumerate}
\item 
If $\alpha J_{11} \leq 1$ and $(1-\alpha) J_{22} > 1$ (or analogously $\alpha J_{11} > 1$ and $(1-\alpha) J_{22} \leq 1$), then for every value of $J_{12}$ there are three stationary solutions of \eqref{MKV2}: $\mathbf{m}^{(0)}$, which is a saddle point for the linearized system, and two linearly stable equilibria $\pm \mathbf{m}^{(1)}$.
\item 
If $\alpha J_{11} > 1$ and $(1-\alpha) J_{22} > 1$, then there exists another critical value $\widetilde{J}_{12}^{(c)}$, with $0 < \widetilde{J}_{12}^{(c)} < J_{12}^{(c)}$, such that
\begin{enumerate}
\item 
for $\vert J_{12} \vert < \widetilde{J}_{12}^{(c)}$ the stationary solutions of \eqref{MKV2} are nine: $\mathbf{m}^{(0)}$, which is linearly unstable; $\pm \mathbf{m}^{(1)}$ and $\pm \mathbf{m}^{(2)}$, which are linearly stable; $\pm \mathbf{s}^{(1)}$ and $\pm \mathbf{s}^{(2)}$, that are saddle points for the linearized system.
\item 
for $\widetilde{J}_{12}^{(c)} < \vert J_{12} \vert < J_{12}^{(c)}$ the critical points of \eqref{MKV2} reduce to five: $\mathbf{m}^{(0)}$, which is still linearly unstable; $\pm \mathbf{m}^{(1)}$, which are linearly stable and  $\pm \mathbf{s}^{(1)}$, that are saddle points for the linearized system.
\item 
for $\vert J_{12} \vert \geq J_{12}^{(c)}$ the equilibria of \eqref{MKV2} become three: $\mathbf{m}^{(0)}$ and $\pm \mathbf{m}^{(1)}$. The points $\pm \mathbf{m}^{(1)}$ are  linearly stable; $\mathbf{m}^{(0)}$ is a saddle point if $\vert J_{12} \vert > J_{12}^{(c)}$, while if $\vert J_{12} \vert = J_{12}^{(c)}$ the linearized system has a neutral direction. 
\end{enumerate}
\end{enumerate}
In all ranges of the parameters $\mathbf{m}^{(0)} = \boldsymbol{0}$; whereas, the coordinates of the points $\pm \mathbf{m}^{(1)}$, $\pm \mathbf{m}^{(2)}$, $\pm \mathbf{s}^{(1)}$ and $\pm \mathbf{s}^{(2)}$ depend on $\alpha$ and $\mathbb{J}$.
\end{theorem}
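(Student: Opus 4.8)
The plan is to reduce the search for equilibria of \eqref{MKV2} to the self-consistency system obtained by setting the right-hand sides to zero, namely
\[
m_1 = \tanh\!\big[R_1(\mathbf{m})\big], \qquad m_2 = \tanh\!\big[R_2(\mathbf{m})\big],
\]
and to organize the whole analysis around the potential
\[
\phi(\mathbf{m}) = -\tfrac{1}{2}\big[\alpha^2 J_{11} m_1^2 + 2\alpha(1-\alpha)J_{12}m_1 m_2 + (1-\alpha)^2 J_{22} m_2^2\big] + \alpha\, I(m_1) + (1-\alpha) I(m_2),
\]
where $I(m) = \tfrac{1+m}{2}\log(1+m) + \tfrac{1-m}{2}\log(1-m)$, so that $I'(m) = \mathrm{arctanh}(m)$. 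A direct differentiation shows that the critical points of $\phi$ are exactly the solutions of the self-consistency system; in particular $\mathbf{m}^{(0)}=\boldsymbol{0}$ is always one, since $\tanh 0 = 0$. Evaluating the Jacobian $Df$ of the vector field \eqref{MKV2} at an equilibrium and using $1-m_i^2 = \mathrm{sech}^2[R_i]$ there, I would establish the identity $Df = -P\,\mathrm{Hess}\,\phi$ with $P$ a positive-definite diagonal matrix, so that $Df$ and $-\mathrm{Hess}\,\phi$ have the same inertia. This dictionary turns every stability claim into a statement on the nature of a critical point of $\phi$: linearly stable equilibria are its local minima, saddles of the linearization are its indefinite critical points, linearly unstable equilibria are its local maxima, and a neutral direction corresponds to a degenerate Hessian.

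I would first settle the origin. The linearization there is
\[
Df|_{\boldsymbol{0}} = 2\begin{pmatrix} \alpha J_{11}-1 & (1-\alpha)J_{12}\\ \alpha J_{12} & (1-\alpha)J_{22}-1\end{pmatrix},
\]
whose determinant vanishes precisely when $\alpha(1-\alpha)J_{12}^2 = (1-\alpha J_{11})(1-(1-\alpha)J_{22})$, i.e.\ at $|J_{12}|=J_{12}^{(c)}$. Discussing the signs of $\det Df|_{\boldsymbol 0}$ and of $\mathrm{tr}\,Df|_{\boldsymbol 0} = 2[(\alpha J_{11}-1)+((1-\alpha)J_{22}-1)]$ across the three parameter regimes then yields all assertions about $\mathbf{m}^{(0)}$: when both diagonal terms are $\le 1$ the origin is stable for $|J_{12}|<J_{12}^{(c)}$ and a saddle for $|J_{12}|>J_{12}^{(c)}$; when exactly one exceeds $1$ the determinant is negative for every $J_{12}$, so the origin is always a saddle; and when both exceed $1$ the trace is positive, so the origin is an unstable node for $|J_{12}|<J_{12}^{(c)}$ and a saddle for $|J_{12}|>J_{12}^{(c)}$, with a neutral direction at equality.

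The nonzero equilibria I would count by a continuation argument that exploits the symmetry $\mathbf{m}\mapsto -\mathbf{m}$. Solving for the cross terms gives $(1-\alpha)J_{12}m_2 = \mathrm{arctanh}(m_1)-\alpha J_{11}m_1 =: G_1(m_1)$ and $\alpha J_{12}m_1 = \mathrm{arctanh}(m_2)-(1-\alpha)J_{22}m_2 =: G_2(m_2)$, where $G_1$ is odd and strictly increasing when $\alpha J_{11}<1$ and ``$N$-shaped'' when $\alpha J_{11}>1$, and likewise for $G_2$ with $(1-\alpha)J_{22}$; this is exactly the trichotomy of the statement. At $J_{12}=0$ the system decouples into two scalar Curie--Weiss equations, yielding $1$, $3$ and $9$ solutions in cases (1), (2), (3); I would then track these as $|J_{12}|$ grows via the implicit function theorem, branches being created or destroyed only where $Df$ degenerates. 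In case (3) the four ``stable'' and four ``saddle'' solutions present at $J_{12}=0$ annihilate in pairs through saddle--node bifurcations: one such collision at a value $\widetilde{J}_{12}^{(c)}$ reduces the count from nine to five, and the surviving saddles merge into the origin at $J_{12}^{(c)}$ through a pitchfork that reduces it to three, forcing $0<\widetilde{J}_{12}^{(c)}<J_{12}^{(c)}$. The bookkeeping can be cross-checked by the Poincar\'e--Hopf theorem: since \eqref{MKV2} points strictly inward on the boundary of the invariant square $(-1,1)^2$ (where $\dot m_i = -2e^{-R_i}$ at $m_i=1$ and $\dot m_i = 2e^{R_i}$ at $m_i=-1$), the indices of all equilibria, counting $+1$ for nodes and $-1$ for saddles, must sum to $1$, and each claimed configuration indeed does.

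The main obstacle is the global count in case (3): proving that for small $|J_{12}|$ there are \emph{exactly} nine equilibria, and locating the intermediate threshold $\widetilde{J}_{12}^{(c)}$ with the correct ordering relative to $J_{12}^{(c)}$. The difficulty is that the reduced one-variable equation $m_1 = (\alpha J_{12})^{-1}G_2\big(G_1(m_1)/((1-\alpha)J_{12})\big)$ involves a composition of two non-monotone maps, so controlling the exact number of its roots, and showing they coalesce in genuine transversal saddle--node collisions rather than higher-order degeneracies, requires a careful convexity and monotonicity analysis of the individual branches. The index identity guarantees consistency of the counts but not the uniqueness of the bifurcation values, which is precisely where quantitative estimates on $G_1$ and $G_2$ will be needed.
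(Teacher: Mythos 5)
Your reduction to the self-consistency system and your treatment of the origin are correct, and the gradient-structure dictionary $Df = -P\,\mathrm{Hess}\,\phi$ at equilibria (with $P$ diagonal and positive definite, so that inertia is preserved by Sylvester's law via $P^{1/2}\,\mathrm{Hess}\,\phi\,P^{1/2}$) is a clean way to formalize what the paper only states as a remark, namely the correspondence between linear stability for \eqref{MKV2} and local minimality for the free energy \eqref{free:energy}. Your trace/determinant discussion of $Df\vert_{\boldsymbol{0}}$ reproduces the paper's eigenvalue computation at $\mathbf{m}^{(0)}$, including the neutral direction at $\vert J_{12}\vert = J_{12}^{(c)}$.

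The genuine gap is in the counting of the nonzero equilibria, and you have named it yourself: your argument establishes neither that the counts are exactly as claimed in each regime, nor the existence, uniqueness and ordering $0<\widetilde{J}_{12}^{(c)}<J_{12}^{(c)}$ of the intermediate threshold. Continuation from $J_{12}=0$ plus Poincar\'e--Hopf cannot close this. The index identity is blind to saddle-node pairs (a saddle and a node jointly contribute $0$), so it is equally consistent with $3$, $7$, $11,\dots$ equilibria; and the implicit function theorem only controls the branches you already have --- it does not exclude that new pairs are born at degenerate points located far from those branches, which is also why your case (a)(i) uniqueness claim and your case (b) ``three for every $J_{12}$'' claim are not yet proved. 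This exact-count problem is where the paper invests essentially all of its effort: it inverts the fixed-point equations into the curves $\gamma_1,\gamma_2$, proves their qualitative features (odd, diverging at $\pm 1$, monotone iff $\alpha J_{11}\le 1$, resp.\ $(1-\alpha)J_{22}\le 1$, otherwise with a single local maximum/minimum at $\mp\sqrt{1-1/(\alpha J_{11})}$, and convex for positive abscissas with the only inflection at the origin), compares the slopes $\gamma_1'(0)$ and $\left(\gamma_2^{-1}\right)'(0)$ --- which is exactly what produces $J_{12}^{(c)}$ --- and defines $\widetilde{J}_{12}^{(c)}$ through an explicit tangency system between $\gamma_1$ and $\gamma_2$, the monotone dependence of the wells on $J_{12}$ giving the ordering of the thresholds. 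The same curve analysis also delivers stability for free: the paper rewrites the eigenvalues of $DV(\mathbf{m}^*)$ in terms of $\gamma_1'(m_1^*)$ and $\left(\gamma_2^{-1}\right)'(m_1^*)$, so the sign pattern at each intersection follows from how the curves cross there, whereas in your scheme the classification of $\pm\mathbf{m}^{(1)}$, $\pm\mathbf{m}^{(2)}$, $\pm\mathbf{s}^{(1)}$, $\pm\mathbf{s}^{(2)}$ rests on the unproven transversality of the claimed saddle-node and pitchfork bifurcations. In short, the part you deferred as ``the main obstacle'' is not a technical refinement but the core of the theorem.
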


The rich scenario depicted in Theorem \ref{thm:PhDia} can be qualitatively summarized in the phase diagrams presented in Figure \ref{fig:1}, where the phase space $(\alpha J_{11}, J_{12})$ is described for a fixed value of $(1-\alpha) J_{22}$. A totally analogous picture can be drawn in the converse situation when we fix $\alpha J_{11}$ instead. 

\begin{figure}[h]
\centering%
\subfigure{\includegraphics[width=0.49\textwidth]{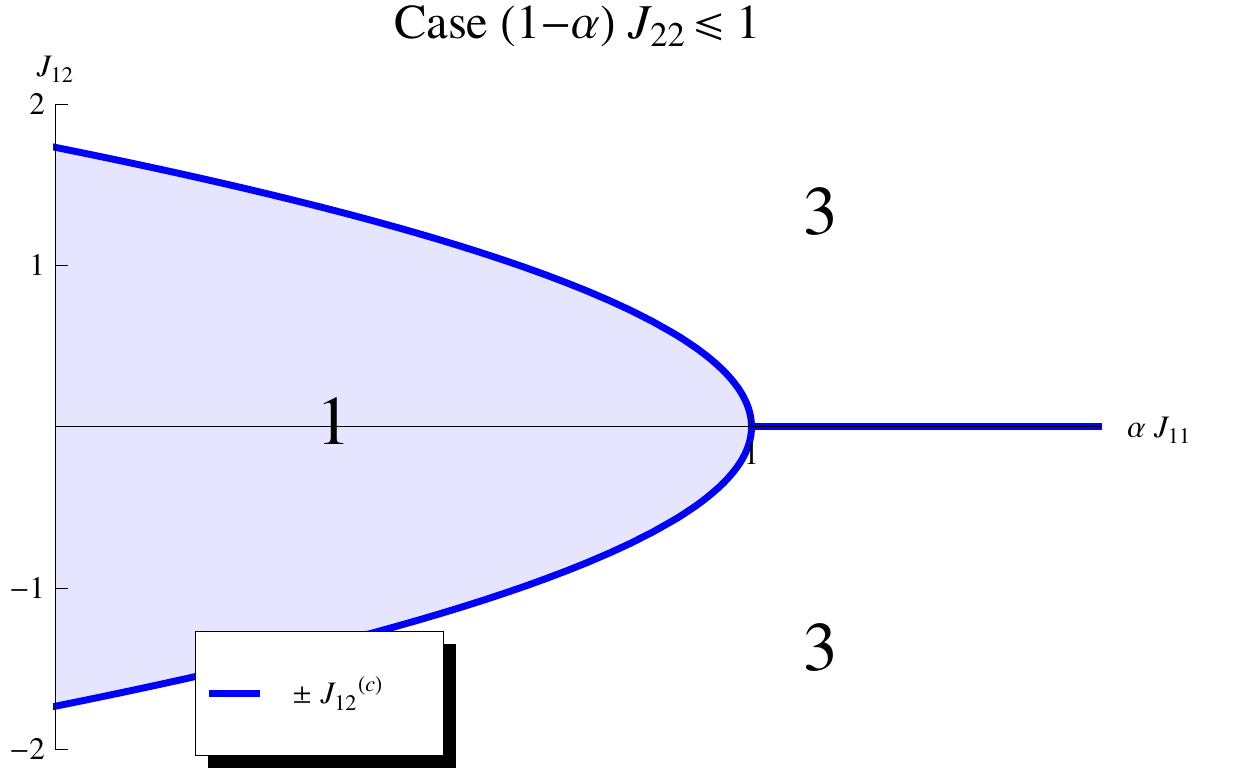}} 
\subfigure{\includegraphics[width=0.49\textwidth]{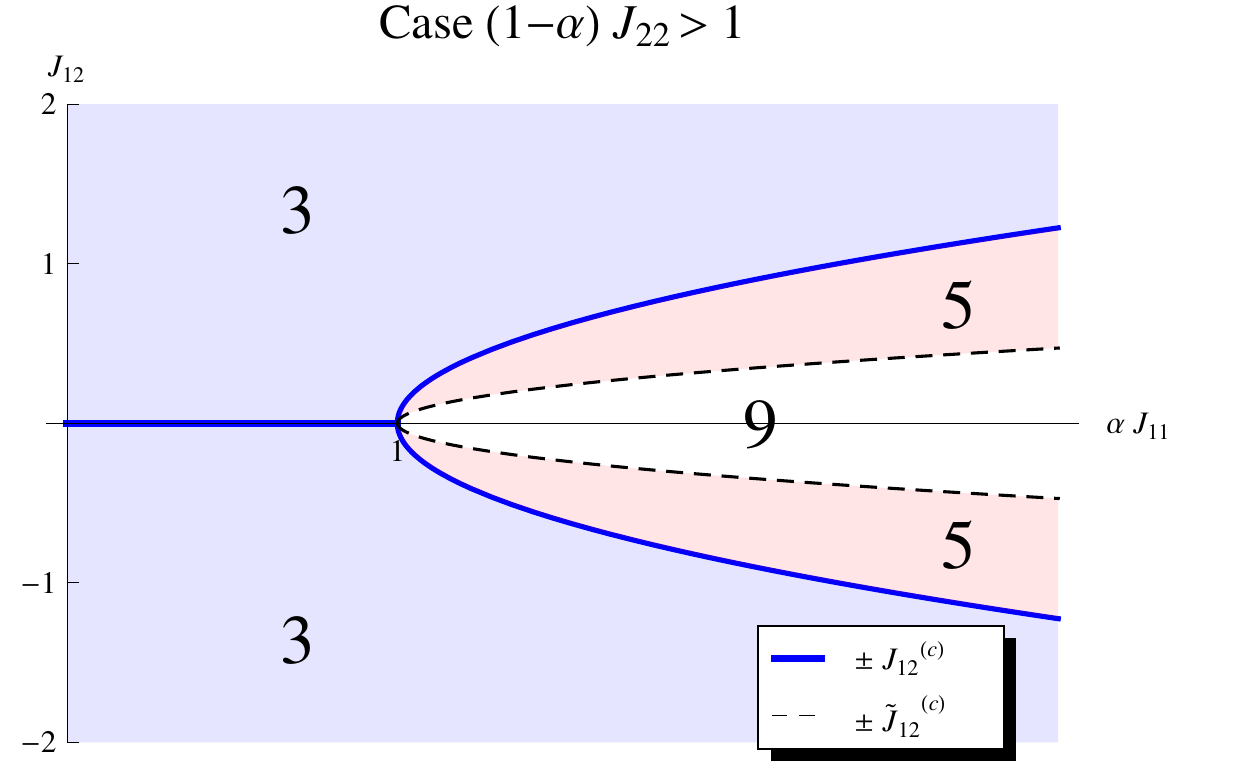}}
\caption{Picture of the phase space $\left(\alpha J_{11}, J_{12} \right)$ for a fixed value of $(1-\alpha) J_{22}$. Each coloured region represents a phase with as many equilibria of \eqref{MKV2} as indicated by the numerical label. In the left panel the case $(1-\alpha) J_{22} \leq 1$ is illustrated. The blue separation line is $\vert J_{12} \vert = J_{12}^{(c)}$. The right panel displays the case $(1-\alpha) J_{22} > 1$. The blue curve is still $\vert J_{12} \vert = J_{12}^{(c)}$, while the dashed black one corresponds to $\vert J_{12} \vert = \widetilde{J}_{12}^{(c)}$ and it is qualitative. Indeed, the latter is defined implicitly by a tangency relation and it can be obtained numerically. %
More hints about this curve will be given in the proof of Theorem \ref{thm:PhDia} in Section \ref{sct:Proofs}.}
\label{fig:1}
\end{figure}

\begin{remark}
Imagine to start with two independent Curie-Weiss models ($J_{12} =0$): one of size $N_1$ and with inverse temperature $\alpha J_{11} \leq 1$ and the other of size $N_2$ and with inverse temperature $(1-\alpha) J_{22} > 1$. The long time behavior of the whole system is well known in this independence case: the first subsystem will converge to a disordered state,  while the second one will polarize. Item \emph{(b)} in Theorem~\ref{thm:PhDia}  says that if we turn on the coupling strength $J_{12}$, whatever its intensity is, both populations will polarize in the long run. Hence, the supercritical Curie-Weiss model creates a sort of bulk of polarization able to influence also the rest of the global population.    
\end{remark}

\begin{remark}
If we consider the Hamiltonian \eqref{Hamiltonian:bipop} with $h_1=h_2=0$, the associated asymptotic free energy is given by
\begin{equation}\label{free:energy}
\mathcal{F} ( \boldsymbol{\nu} ) = - \frac{1}{2} \left[ \alpha^2 J_{11} \nu_1^2 + 2 \alpha (1-\alpha) J_{12} \nu_1 \nu_2 + (1-\alpha)^2 J_{22} \nu_2^2\right] + \alpha I^{(B)} (\nu_1) + (1-\alpha) I^{(B)} (\nu_2) \,,
\end{equation}
where $\boldsymbol{\nu} \in [-1,+1]^2$ is a magnetization vector  and the Cram{\'e}r entropy of a Bernoulli random variable $I^{(B)}$ is defined as
\[
I^{(B)} (\nu) = \frac{1-\nu}{2} \log \frac{1-\nu}{2} + \frac{1+\nu}{2} \log \frac{1+\nu}{2} \,.
\]
Rephrasing the statement of Theorem \ref{thm:PhDia} in the terminology of statics analysis we get the classification of the critical points of the functional \eqref{free:energy}. In particular, 
\begin{itemize}
\item the equilibria of \eqref{MKV2} and the critical points of \eqref{free:energy} coincide;
\item being linearly stable (resp. linearly unstable) for the dynamics \eqref{MKV2} translates into being a local minimum (resp. local maximum) for the free energy;
\item saddles are saddles in both cases. 
\end{itemize}
Moreover, it can be readily seen that, whenever they exist as critical points, $\pm \mathbf{m}^{(1)}$ are the global minima for \eqref{free:energy}. In Figure \ref{fig:2} some contour plots of the free energy surface are shown. 
\end{remark}

\setcounter{subfigure}{0}
\begin{figure}
\centering%
\subfigure[Case $\alpha J_{11} \leq 1$ and $(1-\alpha) J_{22} \leq 1$.]{
\includegraphics[width=0.4\textwidth]{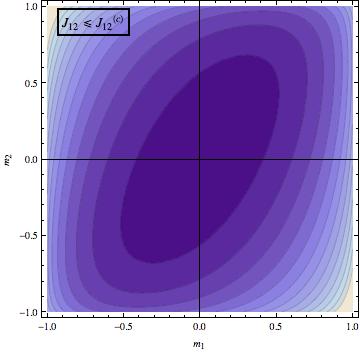} 
\includegraphics[ width=0.4\textwidth]{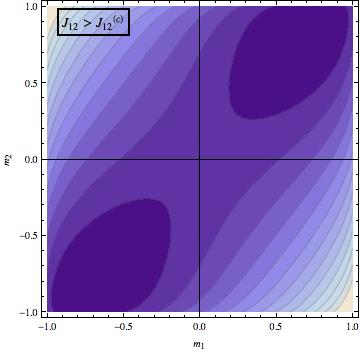} 
}
\subfigure[Case $\alpha J_{11} \leq 1$ and $(1-\alpha) J_{22} > 1$ (left panel) or, conversely, case $\alpha J_{11} > 1$ and $(1-\alpha) J_{22} \leq 1$ (right panel).]{
\includegraphics[width=0.4\textwidth]{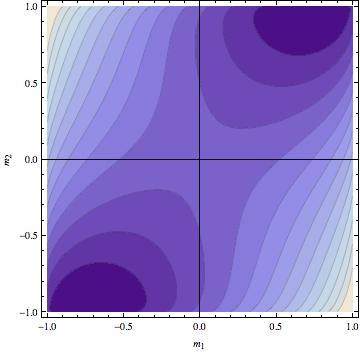} 
\includegraphics[ width=0.4\textwidth]{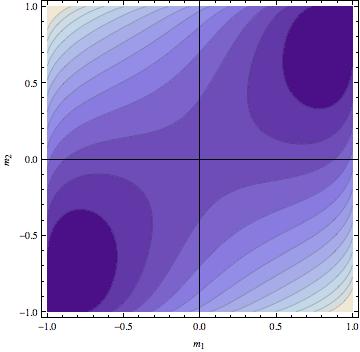} 
}
\centerline{
\subfigure[Case $\alpha J_{11} > 1$ and $(1-\alpha) J_{22} > 1$.]{
\includegraphics[width=0.4\textwidth]{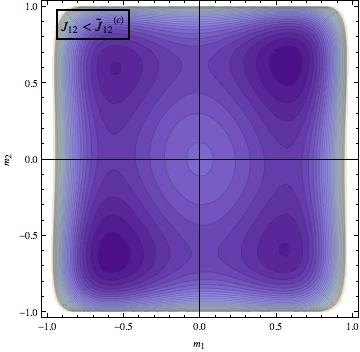} 
\includegraphics[ width=0.4\textwidth]{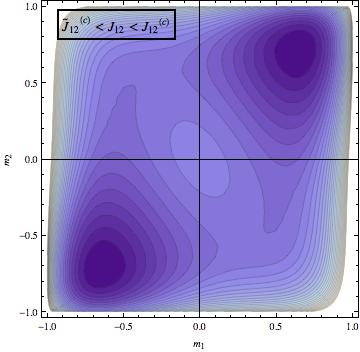} 
\includegraphics[ width=0.4\textwidth]{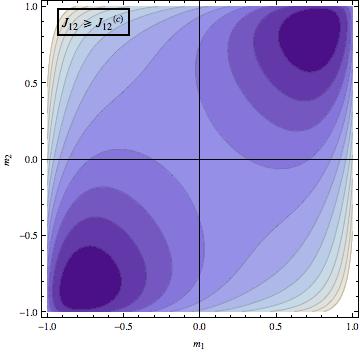} 
}}
\caption{Contour plots of the free energy surface \eqref{free:energy} for several values of the parameters. Color scale convention: the darker the shaded region is, the lower the elevation of the surface is.}
\label{fig:2}
\end{figure}

\section{Proofs}\label{sct:Proofs}

\subsection{Proof of Theorem \ref{thm:LDP}}

We start with a technical lemma that gives a representation of $P_{N}$ (law of the process) in terms of the pair of empirical measures $\boldsymbol{\rho}_N$.

\begin{lemma}\label{lmm:RND_PW}
It holds
\begin{equation}\label{RND_PW:Formula}
\frac{dP_{N}}{dW^{\otimes N}} \left( \sigma[0,T] \right) = \exp \left[ NF \left ( \boldsymbol{\rho}_N \left( \sigma[0,T] \right) \right) + O(1) \right]
\end{equation}
where, for $\boldsymbol{Q} \in \mathscr{M}_1(\mathscr{D}[0,T]) \times \mathscr{M}_1(\mathscr{D}[0,T])$, $F(\boldsymbol{Q})$ is expressed by \eqref{F}.
\end{lemma}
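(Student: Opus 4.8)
The plan is to read off \eqref{RND_PW:Formula} from the Girsanov change-of-measure formula for pure-jump Markov processes and then to reorganise the resulting path functional in terms of the empirical measures $\boldsymbol{\rho}_N$. Both $P_N$ and $W^{\otimes N}$ are laws on $(\mathscr{D}[0,T])^N$ of continuous-time jump processes issued from the same product law $\lambda^{\otimes N}$: under $P_N$ a site $j$ of population $I_i$ flips at rate $e^{-\sigma_j[R_i(\mathbf{m}_N(\sigma))+h_i]}$, while under $W^{\otimes N}$ every site flips at rate $1$. Because each $R_i$ is a bounded linear function of magnetizations lying in $[-1,1]$, these rates are bounded away from $0$ and $+\infty$, so the two laws are mutually absolutely continuous and I may write
\[
\log\frac{dP_N}{dW^{\otimes N}}\big(\sigma[0,T]\big)=\sum_{\text{jumps}}\log(\text{rate})-\int_0^T\big(\Lambda_N(\sigma(s))-N\big)\,ds,
\]
where $\Lambda_N(\sigma)=\sum_{j=1}^N e^{-\sigma_j[R_{i(j)}(\mathbf{m}_N(\sigma))+h_{i(j)}]}$ is the total $P_N$-rate, $i(j)$ is the population of $j$, and a jump at $\tau$ flipping $j$ contributes $\log(\text{rate})=-\sigma_j(\tau^-)[R_{i(j)}(\mathbf{m}_N(\sigma(\tau^-)))+h_{i(j)}]$. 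The initial-condition factor is absent since both processes start from $\lambda^{\otimes N}$.

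The compensator integral is already of the desired shape. Using $\mathbf{m}_N(\sigma(s))=\mathbf{m}_{\boldsymbol{\rho}_N(s)}$, splitting the sum defining $\Lambda_N$ over the two populations, and writing $\frac1{N_i}\sum_{j\in I_i}(\cdot)=\int(\cdot)\,d\rho_i$, the term $-\int_0^T(\Lambda_N-N)\,ds$ becomes $N$ times
\[
\alpha\!\int\!\!\int_0^T\!\big(1-e^{-\eta(s)[R_1(\mathbf{m}_{\boldsymbol{\rho}_N(s)})+h_1]}\big)ds\,d\rho_1+(1-\alpha)\!\int\!\!\int_0^T\!\big(1-e^{-\eta(s)[R_2(\mathbf{m}_{\boldsymbol{\rho}_N(s)})+h_2]}\big)ds\,d\rho_2,
\]
i.e.\ exactly the integral contributions to $NF(\boldsymbol{\rho}_N)=N\big(\alpha F_1(\rho_1)+(1-\alpha)F_2(\rho_2)\big)$ in \eqref{F}--\eqref{F1-2}.

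It then remains to identify the jump-sum with the boundary terms of $F$. Writing $b_j(t):=R_{i(j)}(\mathbf{m}_{\boldsymbol{\rho}_N(t)})+h_{i(j)}$, I would apply a discrete (Stieltjes) integration by parts to $t\mapsto\frac12\sigma_j(t)b_j(t)$ for each tagged spin $j$: since $\sigma_j(\tau^+)=-\sigma_j(\tau^-)$ at every flip of $j$, the contribution of those flips to the increments of $\frac12\sigma_j b_j$ reproduces $-\sigma_j(\tau^-)b_j(\tau^-)$ up to the simultaneous variation of $b_j$, yielding
\[
\sum_{\text{flips of }j}\big(-\sigma_j(\tau^-)b_j(\tau^-)\big)=\frac{\sigma_j(T)}2\,b_j(T)-\frac{\sigma_j(0)}2\,b_j(0)+r_j,
\]
with $r_j$ collecting the terms generated by the variation of $b_j$ along the path. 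Summing over $j$ and regrouping by population, the endpoint contributions are precisely $N$ times the boundary terms $\frac{\eta(T)}2[R_i(\mathbf{m}_{\boldsymbol{Q}(T)})+h_i]-\frac{\eta(0)}2[R_i(\mathbf{m}_{\boldsymbol{Q}(0)})+h_i]$ of \eqref{F1-2} evaluated at $\boldsymbol{Q}=\boldsymbol{\rho}_N$.

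The main obstacle I anticipate is controlling the remainder $\sum_j r_j$ as the $O(1)$ term. A single flip moves one magnetization by $O(1/N)$, hence perturbs every $b_j$ by $O(1/N)$; in addition a spin feels a self-interaction correction of the same order, reflecting that the Glauber rate equals $e^{-\frac12[H_N(\sigma^j)-H_N(\sigma)]}$ only up to an $O(1/N)$ factor, consistent with the reversibility Remark. Thus each jump contributes $O(1/N)$ to the remainder, and the quantitative input I would need is a uniform estimate showing that the total number of jumps on $[0,T]$ is $O(N)$ outside an event of super-exponentially small $W^{\otimes N}$-probability --- e.g.\ by dominating the per-site jump counts with independent Poisson clocks of rate $e^{\|R_i\|_\infty+|h_i|}$. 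Showing that this accumulation of microscopic corrections remains bounded uniformly in $N$ is the crux; once it is in hand the two computations above combine to give $NF(\boldsymbol{\rho}_N)+O(1)$.
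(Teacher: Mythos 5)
Your skeleton is the same as the paper's: Girsanov's formula for point processes, exact identification of the compensator with the integral terms of \eqref{F1-2}, and a conversion of the jump sum into boundary terms. The first two steps are correct and agree with the paper's proof. The genuine gap is in the third step, in the accounting of the remainder $\sum_j r_j$. The estimate ``each jump contributes $O(1/N)$ to the remainder'' is true only for each \emph{individual} $r_j$: a flip of any one spin moves the magnetizations by $O(1/N)$ and therefore perturbs the field $b_j$ of \emph{every one} of the $N$ tagged spins by $O(1/N)$, so a single jump contributes $O(1)$, not $O(1/N)$, to the aggregate $\sum_j r_j$ that actually enters the exponent. Since the total number of jumps $K$ on $[0,T]$ is itself of order $N$ on typical paths (this is the estimate you propose to prove, and it is the best possible one), your argument bounds $\sum_j r_j$ only by $O(N)$ --- the same order as the main term $NF(\boldsymbol{\rho}_N)$ --- so no jump-count estimate, however sharp, can close the proof.

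Nor is this a constant lost to crude bounding that finer counting could recover: because all spins of population $I_i$ feel the same field $b_i(t)=R_i\bigl(\mathbf{m}_{\boldsymbol{\rho}_N(t)}\bigr)+h_i$, the aggregate remainder equals, up to genuine $O(K/N)$ terms,
\[
-\frac{N_1}{2}\int_0^T m_{\rho_1(t^-)}\,db_1(t)\;-\;\frac{N_2}{2}\int_0^T m_{\rho_2(t^-)}\,db_2(t)
\;=\;-\frac{N}{4}\,\Delta\Bigl[\alpha^2 J_{11} m_{\rho_1}^2+2\alpha(1-\alpha)J_{12}m_{\rho_1}m_{\rho_2}+(1-\alpha)^2J_{22}m_{\rho_2}^2\Bigr]+O\left(\frac{K}{N}\right),
\]
where $\Delta[\,\cdot\,]$ denotes the increment between times $0$ and $T$: an explicit \emph{order-$N$} quadratic form in the endpoint magnetizations, nonzero whenever these differ (under $W^{\otimes N}$ the magnetizations relax exponentially towards zero, so they do differ on typical paths). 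This is precisely the point where the paper does something structurally different: instead of estimating, it invokes Lemma~3 of \cite{DaPdHo96}, an identity for \emph{reversible} spin-flip systems. The mechanism behind that identity is that each Girsanov jump term is half a discrete energy increment, $\log(\text{rate of the flip at }\tau)=-\frac{1}{2}\bigl[H_N(\sigma(\tau))-H_N(\sigma(\tau^-))\bigr]+O(1/N)$, so the whole jump sum telescopes to $-\frac{1}{2}\bigl[H_N(\sigma(T))-H_N(\sigma(0))\bigr]+O(K/N)$, a pure endpoint functional of $\boldsymbol{\rho}_N$. This gradient structure --- which your proposal uses only to justify an $O(1/N)$ self-interaction correction --- is the missing ingredient. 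Be warned, finally, that if you carry the telescoping out, the interaction part of the endpoint functional emerges with coefficient $\frac14$ while the external-field part emerges with $\frac12$, whereas in the boundary terms of \eqref{F1-2} both carry $\frac12$; reconciling the two is exactly the delicate issue that your $O(1/N)$-per-jump bookkeeping erases, and it requires an explicit argument (the time-varying mean field cannot be treated like the constant field $h_i$), not an estimate.
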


\begin{proof}
The proof is basically an application of the analogous of the Girsanov's formula in the case of stochastic integrals with respect to point processes (see ~\cite{Bre81, LiSh01}). \\
The exponent in the Radon-Nikodym derivative \eqref{RND_PW:Formula} consists of two analogous terms: one related to the sum over sites belonging to population $I_1$ and the other to the sum over sites in $I_2$. Not to be redundant we make explicit only the computations related to the first term; at each step the second is similar and will be referred to as {\scshape term 2}. \\
Let $(\mathcal{N}_t^{(1)}(j))_{j \in I_1}$ (resp. $(\mathcal{N}_t^{(2)}(j))_{j \in I_2}$) 
be the multivariate Poisson process counting the jumps of $\sigma_j$ in population $I_1$ (resp. $I_2$). 
If we read $\displaystyle{\sigma_j(t-) = \lim_{s \rightarrow t-} \sigma_j(s)}$ and $\displaystyle{m_{\rho_1(t-)}(\sigma) =  \lim_{s \rightarrow t-} m_{\rho_1(s)}(\sigma)}$, it yields
\begin{multline*}
\frac{dP_{N}}{dW^{\otimes N}} \left( \sigma[0,T] \right) = \exp \Bigg\{ \sum_{j \in I_1} \left[ \int_0^T  \left(1-e^{- \sigma_j(t) \left[ R_1 \left( \mathbf{m}_{\boldsymbol{\rho}_N(t)}(\sigma) \right) + h_1 \right]} \right) dt  \right. \\
\left. + \int_0^T \log e^{- \sigma_j(t-) \left[ R_1 \left( \mathbf{m}_{\boldsymbol{\rho}_N(t-)}(\sigma) \right) + h_1 \right]} d\mathcal{N}_t^{(1)}(j) \right]+ \mbox{{\scshape term 2}} \Bigg\} \,.
%
\end{multline*}
Moreover, since $\sigma$ has no simultaneous jumps and $\int \mathcal{N}^{(1)}_T d\rho_1 < +\infty$ (resp. $\int \mathcal{N}^{(2)}_T d\rho_1 < +\infty$) almost surely with respect to $W^{\otimes N}$, we get
%
%
\begin{multline*}
\frac{dP_{N}}{dW^{\otimes N}} \left( \sigma[0,T] \right) = \exp \Bigg\{ \sum_{j \in I_1} \left[ \int_0^T  \left(1 - e^{-\sigma_j(t) \left[ R_1 \left( \mathbf{m}_{\boldsymbol{\rho}_N(t)}(\sigma) \right) + h_1 \right]} \right) dt \right. \\
\left. + \int_0^T  \sigma_j(t) \left[ R_1 \left( \mathbf{m}_{\boldsymbol{\rho}_N(t)}(\sigma) \right) + h_1 \right] d\mathcal{N}_t^{(1)}(j) + O \left( \frac{1}{N_1} \right)  \right]  + \mbox{{\scshape term 2}} \Bigg\} \,.
\end{multline*}
%
Lastly, due to a general result for reversible spin-flip systems (we refer to ~\cite{DaPdHo96}, Lemma~3), we can write
\begin{multline*}
\frac{dP_{N}}{dW^{\otimes N}} \left( \sigma[0,T] \right) = \exp \Bigg\{ \sum_{j \in I_1} \left[ \int_0^T  \left(1 - e^{-\sigma_j(t) \left[ R_1 \left( \mathbf{m}_{\boldsymbol{\rho}_N(t)}(\sigma) \right) + h_1 \right]} \right) dt  \right.  \\
\left.   - \frac{\sigma_j(0)}{2} \left[ R_1 \left( \mathbf{m}_{\boldsymbol{\rho}_N(0)}(\sigma) \right) + h_1 \right] + \frac{\sigma_j(T)}{2} \left[ R_1 \left( \mathbf{m}_{\boldsymbol{\rho}_N(T)}(\sigma) \right) + h_1 \right] \right] + \mbox{{\scshape term 2}} + O(1) \Bigg\} 
%
\end{multline*}
and this leads us to the expression \eqref{F} for $F$. 
\end{proof}

Before proving Theorem \ref{thm:LDP}, we need to show the following result.

\begin{proposition}\label{prop:LDP_RN}
Let $\mathcal{W}_N ( \cdot ) := W^{\otimes N} \left\{ \boldsymbol{\rho}_N \in \cdot \right\}$ be the law of $\boldsymbol{\rho}_N$ in the case of independence. The sequence $\{\mathcal{W}_N\}_{N \geq 1}$ obeys a Large Deviation Principle (LDP) with rate function 
\begin{equation}\label{RateFct_RN}
I(\boldsymbol{Q}) = \alpha H(Q_1 \vert W) + (1-\alpha) H(Q_2 \vert W) \,.
\end{equation}
\end{proposition}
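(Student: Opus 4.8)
The plan is to recognize Proposition~\ref{prop:LDP_RN} as a two-population version of Sanov's theorem on path space. In the independence regime defining $\mathcal{W}_N$, every spin flips at constant rate $1$ independently of the others and starts from $\lambda$, so the trajectories $\sigma_1[0,T], \dots, \sigma_N[0,T]$ are i.i.d.\ random elements of the Skorohod space $\mathscr{D}[0,T]$ with common marginal law $W$. Consequently $\rho_1 = \frac{1}{N_1}\sum_{j \in I_1}\delta_{\sigma_j[0,T]}$ is the empirical measure of the $N_1$ i.i.d.\ copies indexed by $I_1$, and $\rho_2$ is the empirical measure of the $N_2$ i.i.d.\ copies indexed by $I_2$; since $I_1 \cap I_2 = \emptyset$, the two random measures $\rho_1$ and $\rho_2$ are \emph{independent}.

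First I would invoke Sanov's theorem. The space $\mathscr{D}[0,T]$ equipped with the Skorohod topology is Polish, and $\mathscr{M}_1(\mathscr{D}[0,T])$ carries the topology of weak convergence; in this setting Sanov's theorem (see~\cite{DeZe93}) asserts that the empirical measure of $n$ i.i.d.\ samples with law $W$ satisfies an LDP at speed $n$ with good rate function $H(\,\cdot\,\vert W)$. Applying this separately to the two populations gives that $\rho_1$ obeys an LDP at speed $N_1$ with rate $H(\,\cdot\,\vert W)$ and $\rho_2$ obeys an LDP at speed $N_2$ with rate $H(\,\cdot\,\vert W)$.

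The two LDPs run at different speeds $N_1$ and $N_2$, so before combining them I would rescale to the common speed $N$. Because $N_1 = \alpha N$ and $N_2 = (1-\alpha) N$, dividing the exponent by $N$ rather than by $N_i$ simply multiplies each rate function by the corresponding proportion: $\rho_1$ satisfies an LDP at speed $N$ with rate $\alpha\,H(\,\cdot\,\vert W)$ and $\rho_2$ at speed $N$ with rate $(1-\alpha)\,H(\,\cdot\,\vert W)$. Now the two families are at a common speed and independent, so by the standard tensorization of large deviation principles for independent sequences (the product-measure LDP, whose rate function is the sum of the marginal rate functions; see~\cite{DeZe93}) the pair $\boldsymbol{\rho}_N = (\rho_1,\rho_2)$ satisfies an LDP at speed $N$ on $\mathscr{M}_1(\mathscr{D}[0,T]) \times \mathscr{M}_1(\mathscr{D}[0,T])$ with rate function
\[
I(\boldsymbol{Q}) = \alpha\,H(Q_1 \vert W) + (1-\alpha)\,H(Q_2 \vert W),
\]
which is exactly \eqref{RateFct_RN}. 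Goodness of $I$ follows since $H(\,\cdot\,\vert W)$ is good and sums of good rate functions over a product space remain good.

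The only genuinely delicate points, as opposed to bookkeeping, are the speed mismatch and the combination step. I expect the main obstacle to be justifying the tensorization cleanly: the lower bound is immediate from independence on product sets, $\mathcal{W}_N(A \times B) = W^{\otimes N_1}\{\rho_1 \in A\}\,W^{\otimes N_2}\{\rho_2 \in B\}$, but the upper bound for arbitrary closed sets requires either the exponential tightness supplied by the goodness of the marginal rate functions or an appeal to the general product-LDP lemma. Verifying that $\mathscr{D}[0,T]$ is Polish and that the weak topology is the relevant one, so that Sanov's theorem applies, is routine but should be stated explicitly, since the whole argument rests on it.
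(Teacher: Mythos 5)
Your proposal is correct and follows essentially the same route as the paper: Sanov's theorem on the Polish space $\mathscr{D}[0,T]$ applied to each population separately, the rescaling of speeds $N_1 = \alpha N$ and $N_2 = (1-\alpha)N$ to the common speed $N$ (producing the factors $\alpha$ and $1-\alpha$ in front of the entropies), and tensorization via the independence of $\rho_1$ and $\rho_2$. If anything you are more careful than the paper's own proof, which writes the upper bound only for product closed sets $C_1 \times C_2$ and leaves implicit the extension to arbitrary closed subsets of $\mathscr{M}_1(\mathscr{D}[0,T]) \times \mathscr{M}_1(\mathscr{D}[0,T])$ --- precisely the point you flag as requiring exponential tightness (supplied by goodness of $H(\cdot\,\vert W)$) or the general product-LDP lemma.
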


\begin{proof}
Since under $\mathcal{W}_N$ the random variables $(\sigma_j[0,T])_{j=1}^N$ are i.i.d., by Sanov's Theorem (see Theorem 3.2.17 in ~\cite{DeSt89}) we can deduce that the sequence $\{\mathcal{W}_{N_i}\}_{N_i \geq 1}$, with \mbox{$\mathcal{W}_{N_i}( \cdot ) = W^{\otimes N_i} \{\rho_i \in \, \cdot\}$}, satisfies a large deviation principle with rate function $H(\cdot \vert W)$, for $i=1, 2$. Moreover, because of independence, if we consider $A,B \in \mathcal{B}(\mathscr{D}[0,T])$ Borelian sets, we get
\[
\mathcal{W}_N  \{\boldsymbol{\rho}_N \in A \times B\} 
%
= \mathcal{W}_{N_1} \{\rho_1 \in A\} \, \mathcal{W}_{N_2} \{\rho_2 \in B\}.
\]
Therefore, taking closed sets $C_1, C_2 \in \mathcal{B}(\mathscr{D}[0,T])$, we can estimate
\begin{align*}
\limsup_{N \to +\infty} \frac{1}{N} \log \mathcal{W}_N (C_1 \times C_2) &\leq \alpha \limsup_{N_1 \to +\infty} \frac{1}{N_1} \log \mathcal{W}_{N_1} (C_1) + (1-\alpha) \limsup_{N_2 \to +\infty} \frac{1}{N_2} \log \mathcal{W}_{N_2} (C_2)\\
&\leq - \inf_{\substack{Q_1 \in C_1 \\ Q_2 \in C_2}} \left[ \alpha H(Q_1 \vert W) + (1-\alpha) H(Q_2 \vert W)\right] \,.
\end{align*}
The LDP lower bound is proved similarly.
\end{proof}

We are now ready to prove the large deviation principle for the laws $\{\mathcal{P}_N\}_{N \geq 1}$ of $\boldsymbol{\rho}_N$ (under $P_N$) stated in Theorem \ref{thm:LDP}. Thanks to Lemma \ref{lmm:RND_PW} we have identified the Radon-Nikodym derivative that relates $W^{\otimes N}$ and $P_N$. A natural way to develop a large deviation principle is now to rely on Proposition \ref{prop:LDP_RN}. Indeed, by using Lemma \ref{lmm:RND_PW}, we have
\[
\mathcal{P}_N(\cdot) = \int \mathds{1}_{\{\boldsymbol{\rho}_N \in \; \cdot\}} \, dP_N  \nonumber\\
                =\int \exp [NF(\boldsymbol{\rho}_N)] \, \mathds{1}_{\{\boldsymbol{\rho}_N \in \; \cdot\}} \, dW^{\otimes N} \nonumber\\
                = \int \exp [NF(\boldsymbol{Q})] \, \mathds{1}_{\{\boldsymbol{Q} \in \; \cdot\}} \, \mathcal{W}_N(d\boldsymbol{Q}) \,,
\]                
with $\boldsymbol{Q} = \boldsymbol{\rho}_N$. This means that
\begin{equation}\label{DerRNfin}
\frac{d\mathcal{P}_N}{d\mathcal{W}_N}(\boldsymbol{Q}) = \exp [NF(\boldsymbol{Q})] \,.
\end{equation}
Equation \eqref{DerRNfin} allows us to apply Varadhan's Lemma (see Theorem 2.2 in ~\cite{Var84}) and conclude that the LDP for $\{\mathcal{W}_N\}_{N \geq 1}$ with rate function $I(\boldsymbol{Q})$ implies the LDP for $\{\mathcal{P}_N\}_{N \geq 1}$ with rate function $I(\boldsymbol{Q})-F(\boldsymbol{Q})$, where $I(\cdot)$ and $F(\cdot)$ are defined by equations \eqref{RateFct_RN} and \eqref{F}, respectively.

\subsection{Proof of Theorem \ref{thm:MKV}}

We need to define a new process and to derive a technical lemma related to it. \\
Given $\boldsymbol{Q} \in \mathscr{M}_1(\mathscr{D}[0,T]) \times \mathscr{M}_1(\mathscr{D}[0,T])$, we can associate with $\boldsymbol{Q}$ a pair $\left( \eta^{(1)}(t), \eta^{(2)}(t) \right)_{t \in [0,T]}$ of independent Markov processes both with state space $\mathscr{S}$, initial distribution $\lambda$ and with respective time-dependent infinitesimal generators $\mathcal{L}_1^{\boldsymbol{Q}}$ and $\mathcal{L}_2^{\boldsymbol{Q}}$, acting on functions from $\mathscr{S}$ to $\mathbb{R}$ as 
\begin{align*}
\mathcal{L}_1^{\boldsymbol{Q}} f \left( \eta \right) &= e^{-\eta \left[ R_1 \left( \mathbf{m}_{\boldsymbol{Q}(t)} \left( \eta \right) \right) + h_1 \right]} \, \nabla^{\eta} f \left( \eta \right) \\
\mathcal{L}_2^{\boldsymbol{Q}} f \left( \eta \right) &= e^{-\eta \left[ R_2 \left( \mathbf{m}_{\boldsymbol{Q}(t)} \left( \eta \right) \right) + h_2 \right]} \, \nabla^{\eta} f \left( \eta \right) .
\end{align*}
Let $P^{\boldsymbol{Q}}_i$ be the distribution of the process $\left( \eta^{(i)}(t) \right)_{t \in [0,T]}$, with $i=1, 2$, then the joint law of the pair $\left( \eta^{(1)}(t), \eta^{(2)}(t) \right)_{t \in [0,T]}$ is denoted by $\boldsymbol{P}^{\boldsymbol{Q}} = P^{\boldsymbol{Q}}_1 \otimes P^{\boldsymbol{Q}}_2$. \\
Next proposition shows a very important property of $\boldsymbol{P}^{\boldsymbol{Q}}$.

\begin{proposition}
For every $\boldsymbol{Q} \in \mathscr{M}_1(\mathscr{D}[0,T]) \times \mathscr{M}_1(\mathscr{D}[0,T])$ such that $\mathcal{I}(\boldsymbol{Q}) < +\infty$,
\begin{equation}\label{mfI(Q)}
\mathcal{I}(\boldsymbol{Q}) = \alpha H \left( Q_1 \left\vert P^{\boldsymbol{Q}}_1 \right. \right) + (1-\alpha) H \left( Q_2 \left\vert P^{\boldsymbol{Q}}_2 \right. \right) \,.
\end{equation}
\end{proposition}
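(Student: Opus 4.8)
The plan is to reduce the identity \eqref{mfI(Q)} to a single Radon--Nikodym computation for each population, combined with the additivity (chain rule) of relative entropy. The crucial point is that each $P_i^{\boldsymbol{Q}}$ is a one-spin-flip law that differs from the reference law $W$ only in that the constant flip rate $1$ is replaced by the rate $e^{-\eta\left[R_i\left(\mathbf{m}_{\boldsymbol{Q}(t)}(\eta)\right)+h_i\right]}$; since $\boldsymbol{Q}$ is fixed, the argument $\mathbf{m}_{\boldsymbol{Q}(t)}$ is a \emph{deterministic} function of time, so $P_i^{\boldsymbol{Q}}$ is simply a time-inhomogeneous Markov process on $\mathscr{S}$ whose rate stays bounded on $[0,T]$. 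First I would note that the hypothesis $\mathcal{I}(\boldsymbol{Q})<+\infty$ forces $H(Q_i\mid W)<+\infty$ for $i=1,2$: by \eqref{RateFct_PN} and the uniform boundedness of the integrands in \eqref{F1-2}, the term $F(\boldsymbol{Q})$ is finite, so finiteness of $\mathcal{I}$ is equivalent to finiteness of both entropies. In particular $Q_i\ll W$, and since the bounded rate makes $P_i^{\boldsymbol{Q}}$ and $W$ mutually absolutely continuous, one has $Q_i\ll P_i^{\boldsymbol{Q}}$ as well, so every density below is well defined.

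Next I would compute $\frac{dP_i^{\boldsymbol{Q}}}{dW}$ by the same Girsanov-for-point-processes argument used in Lemma \ref{lmm:RND_PW}, now for a single spin trajectory $\eta[0,T]$ instead of the full configuration. Because the rate no longer depends on the trajectory (only on $t$ through $\mathbf{m}_{\boldsymbol{Q}(t)}$), the manipulation carries no $O(1)$ correction and, after invoking the reversibility identity of \cite{DaPdHo96}, yields
\[
\frac{dP_i^{\boldsymbol{Q}}}{dW}\left(\eta[0,T]\right)=\exp\bigg\{\int_0^T\Big(1-e^{-\eta(t)\left[R_i\left(\mathbf{m}_{\boldsymbol{Q}(t)}(\eta)\right)+h_i\right]}\Big)\,dt+\frac{\eta(T)}{2}\left[R_i\left(\mathbf{m}_{\boldsymbol{Q}(T)}(\eta)\right)+h_i\right]-\frac{\eta(0)}{2}\left[R_i\left(\mathbf{m}_{\boldsymbol{Q}(0)}(\eta)\right)+h_i\right]\bigg\}.
\]
Taking the logarithm and integrating against $Q_i$, the resulting expression is by definition exactly $F_i(Q_i)$ in \eqref{F1-2}; that is, $\int\log\frac{dP_i^{\boldsymbol{Q}}}{dW}\,dQ_i=F_i(Q_i)$, and the expression in braces being bounded guarantees this integral is finite.

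Finally I would assemble the pieces through the chain rule $\frac{dQ_i}{dW}=\frac{dQ_i}{dP_i^{\boldsymbol{Q}}}\,\frac{dP_i^{\boldsymbol{Q}}}{dW}$, valid $Q_i$-almost surely by the absolute continuities established above. Taking logarithms and integrating against $Q_i$ gives $H(Q_i\mid W)=H\!\left(Q_i\mid P_i^{\boldsymbol{Q}}\right)+F_i(Q_i)$, whence $H\!\left(Q_i\mid P_i^{\boldsymbol{Q}}\right)=H(Q_i\mid W)-F_i(Q_i)$. Multiplying the $i=1$ identity by $\alpha$, the $i=2$ identity by $1-\alpha$, and adding produces $\alpha H\!\left(Q_1\mid P_1^{\boldsymbol{Q}}\right)+(1-\alpha)H\!\left(Q_2\mid P_2^{\boldsymbol{Q}}\right)=\alpha H(Q_1\mid W)+(1-\alpha)H(Q_2\mid W)-F(\boldsymbol{Q})$, which is precisely $\mathcal{I}(\boldsymbol{Q})$ by \eqref{RateFct_PN} and \eqref{F}. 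I expect the only genuinely delicate step to be the justification of this chain-rule splitting together with the finiteness of each summand: it is exactly here that the a priori bound $\mathcal{I}(\boldsymbol{Q})<+\infty$ (giving $Q_i\ll W$) and the uniform boundedness of the Girsanov exponent (giving $F_i(Q_i)\in L^1(Q_i)$ and $P_i^{\boldsymbol{Q}}\sim W$) must be combined, so I would record these integrability facts before carrying out the decomposition.
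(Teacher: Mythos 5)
Your proposal follows exactly the same route as the paper's proof: represent $F_i(Q_i)$ as $\int \log \frac{dP_i^{\boldsymbol{Q}}}{dW}\, dQ_i$ via Girsanov, then split the relative entropy by the chain rule $H(Q_i \vert W) = H\left(Q_i \left\vert P_i^{\boldsymbol{Q}}\right.\right) + \int \log \frac{dP_i^{\boldsymbol{Q}}}{dW}\, dQ_i$, and your bookkeeping of absolute continuity and integrability is a genuine improvement in care. The problem is the explicit density you write down, which is the crux of the whole argument. For the time-inhomogeneous single-spin process with flip rate $e^{-\eta f_i(t)}$, where $f_i(t) := R_i\left(\mathbf{m}_{\boldsymbol{Q}(t)}\right)+h_i$, Girsanov gives
\begin{equation*}
\log \frac{dP_i^{\boldsymbol{Q}}}{dW}\left(\eta[0,T]\right) = \int_0^T \left(1-e^{-\eta(t) f_i(t)}\right) dt \; - \int_0^T \eta(t-) f_i(t-)\, d\mathcal{N}_t \,,
\end{equation*}
and the jump integral, rewritten through the pathwise integration-by-parts identity that constitutes the ``reversibility lemma'' of \cite{DaPdHo96}, equals
\begin{equation*}
\frac{\eta(T) f_i(T) - \eta(0) f_i(0)}{2} \; - \; \frac{1}{2}\int_0^T \eta(t)\, df_i(t)\,,
\end{equation*}
not the boundary terms alone. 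The extra term is not negligible, because $t \mapsto \mathbf{m}_{\boldsymbol{Q}(t)}$ is a non-constant deterministic flow for a generic finite-entropy $\boldsymbol{Q}$ (it is non-constant even for the solution $\boldsymbol{Q}^*$ of Theorem \ref{thm:MKV}, unless $\lambda$ sits at a fixed point). A one-line check exposes the error: on the jump-free path $\eta \equiv +1$ the true density is $\exp\left\{\int_0^T \left(1-e^{-f_i(t)}\right)dt\right\}$ (the ratio of the two no-jump probabilities), while your formula produces the spurious extra factor $\exp\left\{\frac{1}{2}\left(f_i(T)-f_i(0)\right)\right\}$. Your heuristic --- that a rate depending only on time carries no correction --- disposes only of the $O(1/N_1)$ self-jump corrections appearing in Lemma \ref{lmm:RND_PW}; it is in fact the interacting $N$-particle case, where the field is built symmetrically out of the spins themselves, that allows the $\int \sigma_j\, dg$ terms to be resummed into functions of the endpoint configurations, and no such cancellation is available for $P_i^{\boldsymbol{Q}}$.

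Consequently the key identity $\int \log \frac{dP_i^{\boldsymbol{Q}}}{dW}\, dQ_i = F_i(Q_i)$ does not hold with $F_i$ read literally from \eqref{F1-2}: the left-hand side equals $F_i(Q_i) - \frac{1}{2}\int_0^T m_{Q_i(t)}\, df_i(t)$. In fairness, the paper's own proof is no more detailed at precisely this point --- it declares the representation ``easy to verify'' by Girsanov plus reversibility --- so you have faithfully reproduced its argument, gloss included; but as a self-contained proof your step fails as justified, and it can only be repaired by carrying the term $-\frac{1}{2}\int_0^T \eta(t)\, df_i(t)$ along, i.e.\ by reading the corresponding path-integral contribution into the definition of $F_i$ (as in \cite{DaPdHo96}), or by otherwise accounting for $\frac{1}{2}\int_0^T m_{Q_i(t)}\, df_i(t)$. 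Once the density is corrected in this way, your chain-rule assembly goes through verbatim and yields \eqref{mfI(Q)} with the amended $F_i$; without that amendment, neither your argument nor the paper's two-line verification establishes the stated identity.
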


\begin{proof}
Similarly to the proof of Lemma \ref{lmm:RND_PW}, by Girsanov's formula for Markov processes and reversibility, it is easy 
to verify that the following representations for $F_1(Q_1)$ and $F_2(Q_2)$ (defined in \eqref{F1-2}) hold
\[
F_i(Q_i) = \int dQ_i \left( \eta^{(i)}[0,T] \right) \log \frac{dP^{\boldsymbol{Q}}_i}{dW} \left( \eta^{(i)}[0,T] \right)
\quad \mbox{ for } i=1, 2.
\] 
By combining what we obtained, we can compute
\begin{align*}
\mathcal{I}(\boldsymbol{Q}) &= \alpha \left[ H(Q_1 \vert W) - F_1(Q_1) \right] + (1-\alpha) \left[ H(Q_2 \vert W) - F_2(Q_2) \right] \\
&= \alpha \left[ \int dQ_1 \log \frac{dQ_1}{dW} - \int dQ_1 \log \frac{dP^{\boldsymbol{Q}}_1}{dW} \right] + (1-\alpha) \left[ \int dQ_2 \log \frac{dQ_2}{dW} - \int dQ_2 \log \frac{dP^{\boldsymbol{Q}}_2}{dW} \right]\\
&=\alpha H \left(Q_1 \left\vert P^{\boldsymbol{Q}}_1 \right. \right) + (1-\alpha) H \left(Q_2 \left\vert P^{\boldsymbol{Q}}_2 \right. \right)\,.
\end{align*}
\end{proof}

We can now conclude the proof of Theorem \ref{thm:MKV}. \\
By properness of the relative entropy, from \eqref{mfI(Q)} we have that $\mathcal{I}(\boldsymbol{Q}) = 0$ is equivalent to $\boldsymbol{Q}=\boldsymbol{P^Q}$.  Let us suppose $\boldsymbol{Q}^*$ is a solution of this last equation. Then, in particular, $\boldsymbol{q}(t) = \boldsymbol{Q}^*(t) = \boldsymbol{P}^{\boldsymbol{Q}^*}(t)$. The marginals of a Markov process are solutions of the corresponding forward equation that, in this case, leads to the fact that $\boldsymbol{q}(t)$ is a solution of \eqref{MKV1}. This differential equation, being an equation in finite dimension with locally Lipschitz coefficients, has at most one solution in $[0,T]$. Since $\boldsymbol{P}^{\boldsymbol{Q}^*}$ is totally determined by the flow $\boldsymbol{q}(t)$, it follows that equation $\boldsymbol{Q} = \boldsymbol{P^Q}$ has at most one solution. The existence of a solution derives from the fact that $\mathcal{I}(\boldsymbol{Q})$ is the rate function of a large deviation principle and therefore it must have at least one zero. \\
It remains to prove the convergence of $\boldsymbol{\rho}_N$ (law of large numbers). Let $O$ be an arbitrary open neighborhood of $\boldsymbol{Q}^*$ in $\mathscr{M}_1(\mathscr{D}[0,T]) \times \mathscr{M}_1(\mathscr{D}[0,T])$. By the lower semi-continuity of $\mathcal{I}(\cdot)$ and the compactness of its level-sets a standard argument shows that $K (O) := \inf_{\boldsymbol{Q} \notin O} \mathcal{I}(\boldsymbol{Q}) > 0$. By the large deviation upper bound in Theorem \ref{thm:LDP} (type \eqref{LDPc}) there exists a positive constant $A$ such that 
\[
\mathcal{P}_N (\boldsymbol{\rho}_N \notin O) \leq A e^{-N K(O)} \,,
\] 
thus giving convergence to zero with exponential decay.
\subsection{Proof of Theorem \ref{thm:PhDia}}

Let $V \left( \mathbf{x} \right) = \left( 2 \sinh \left[ R_1 \left( \mathbf{x} \right) \right] - 2 x_1 \cosh \left[ R_1 \left( \mathbf{x} \right) \right] ,  2 \sinh \left[ R_2 \left( \mathbf{x} \right) \right] - 2 x_2 \cosh \left[ R_2 \left( \mathbf{x} \right) \right] \right)$. Under our assumptions, we must discuss existence and linear stability of equilibrium solutions for the differential system $\dot{\mathbf{m}}(t) = V \left( \mathbf{m}(t) \right)$.

\emph{Existence of stationary solutions.} Denote by $\mathbf{m}^*$ a pair such that $V \left( \mathbf{m}^* \right)=\boldsymbol{0}$. Any stationary point is then implicitly defined by
\begin{equation}\label{MKV:StatSols}
\begin{array}{l}
m_1^* = \tanh \left[ \alpha J_{11} m_1^* + (1-\alpha) J_{12} m_2^* \right] \\
m_2^* = \tanh \left[ \alpha J_{12} m_1^* + (1-\alpha) J_{22} m_2^* \right].
\end{array}
\end{equation}
By inverting equations \eqref{MKV:StatSols} when $J_{12} \neq 0$ (i.e. the model does not degenerate into two independent Curie-Weiss models), we get
\begin{align}
m_2^* &= \frac{1}{(1-\alpha) J_{12}} \left[ \mathrm{arctanh} \left( m_1^* \right) - \alpha J_{11} m_1^* \right] \tag{$\gamma_1$}\\
m_1^* &= \frac{1}{\alpha J_{12}} \left[ \mathrm{arctanh} \left( m_2^* \right) - (1-\alpha) J_{22} m_2^* \right]. \tag{$\gamma_2$}
\end{align}
The points that verify simultaneously the two equations are the intersections of the curve $\gamma_1 \left( m_1^* \right)$, the graph of the function $m_2^* \left( m_1^* \right)$, with the curve $\gamma_2 \left( m_2^* \right)$, the graph of the function $m_1^* \left( m_2^* \right)$. To detect such points we use a graphical approach.\\
Observe that it suffices to consider $J_{12} > 0$, since the complementary  case can be obtained by symmetry about the vertical axis because of the oddness of the involved functions. From now on we restrict to this range of the parameter $J_{12}$.\\
A quick analysis shows the curves $\gamma_1$ and $\gamma_2$ are characterized by the following features (meant on the $(m_1^*, m_2^*)$ plane for $\gamma_1$, whilst on the $(m_2^*, m_1^*)$ one for $\gamma_2$):
\begin{itemize}
\item they are defined over $[-1,+1]$ and they are odd functions;
\item they diverge to $\pm \infty$ as the corresponding independent variable tends to $\pm 1$;
\item  
\begin{description}
\item[$(\gamma_1)$] 
\begin{enumerate}[label=(\Roman*)]
\item if $\alpha J_{11} \leq 1$, then $\gamma_1$ is increasing for all values $m_1^* \in [-1,+1]$;
\item if $\alpha J_{11} > 1$, then $\gamma_1$ is increasing for $m_1^* \leq  -\sqrt{1-\frac{1}{\alpha J_{11}}}$ or for $m_1^*  \geq \sqrt{1-\frac{1}{\alpha J_{11}}}$. Moreover, $m_1^* = -\sqrt{1-\frac{1}{\alpha J_{11}}}$ and $m_1^* = \sqrt{1-\frac{1}{\alpha J_{11}}}$ correspond to a local maximum and minimum for $\gamma_1$, respectively.
\end{enumerate}
\item[$(\gamma_2)$]
\begin{enumerate}[label=(\Roman*)]
\item if $(1-\alpha) J_{22} \leq 1$, then $\gamma_2$ is increasing for all values $m_2^* \in [-1,+1]$;
\item if $(1-\alpha) J_{22} > 1$, then $\gamma_2$ is increasing for $m_2^* \leq  -\sqrt{1-\frac{1}{(1-\alpha) J_{22}}}$ or for $m_2^*  \geq \sqrt{1-\frac{1}{(1-\alpha) J_{22}}}$. Moreover, $m_2^* = -\sqrt{1-\frac{1}{(1-\alpha) J_{22}}}$ and $m_2^* = \sqrt{1-\frac{1}{(1-\alpha) J_{22}}}$ correspond to a local maximum and minimum for $\gamma_2$, respectively.
\end{enumerate}
\end{description}
\item they are convex for positive abscissas and concave for negative ones with a unique inflection point at the origin. 
\end{itemize}
If we put the graphs $\gamma_1$ and $\gamma_2$ (after a ninety degrees clockwise rotation and a reflection about the horizontal axis) in the same $(m_1^*,m_2^*)$ coordinate system, it is possible to understand how many times the two curves intersect depending on the parameters. We must match in all possible ways their shapes, determined by conditions $(\gamma_1-I\&II)$ and $(\gamma_2-I\&II)$. See Figure~\ref{fig:3} for an example. This leads to the number of solutions of \eqref{MKV:StatSols}.\\

\setcounter{subfigure}{0}
\begin{figure}
\centering%
\subfigure[Case $\alpha J_{11} \leq 1$ and $(1-\alpha) J_{22} \leq 1$.]{
\includegraphics[width=0.4\textwidth]{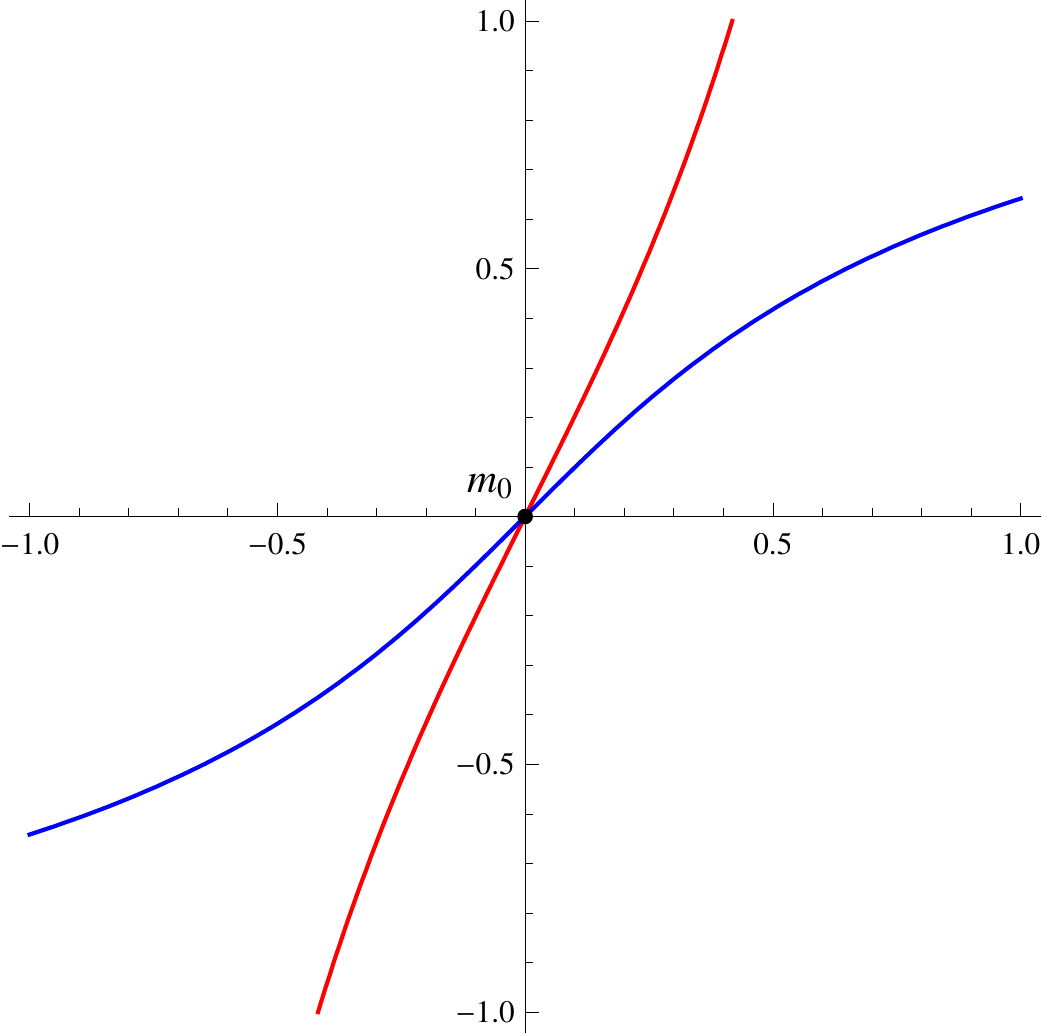} 
\includegraphics[ width=0.4\textwidth]{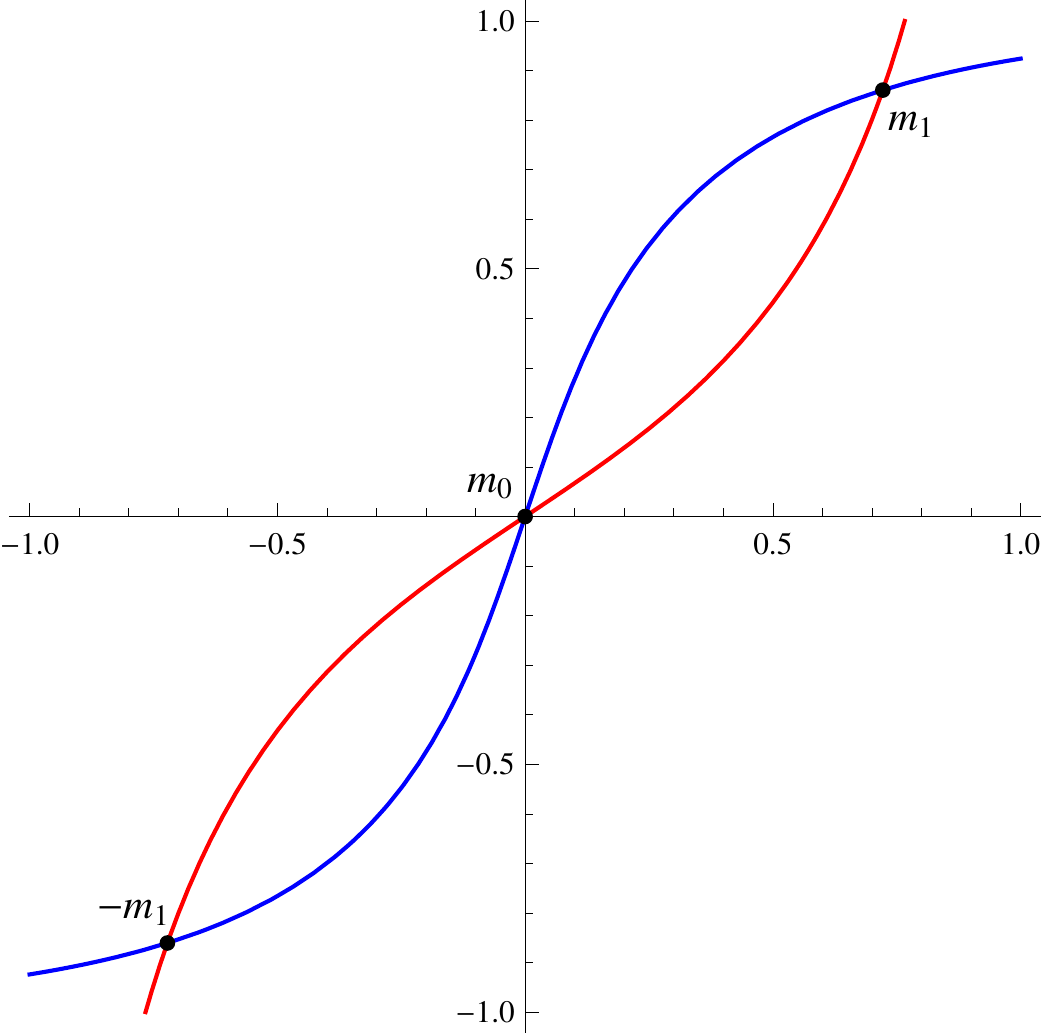} 
}
\subfigure[Case $\alpha J_{11} \leq 1$ and $(1-\alpha) J_{22} > 1$ (left panel) or, conversely, case $\alpha J_{11} > 1$ and $(1-\alpha) J_{22} \leq 1$ (right panel).]{
\includegraphics[width=0.4\textwidth]{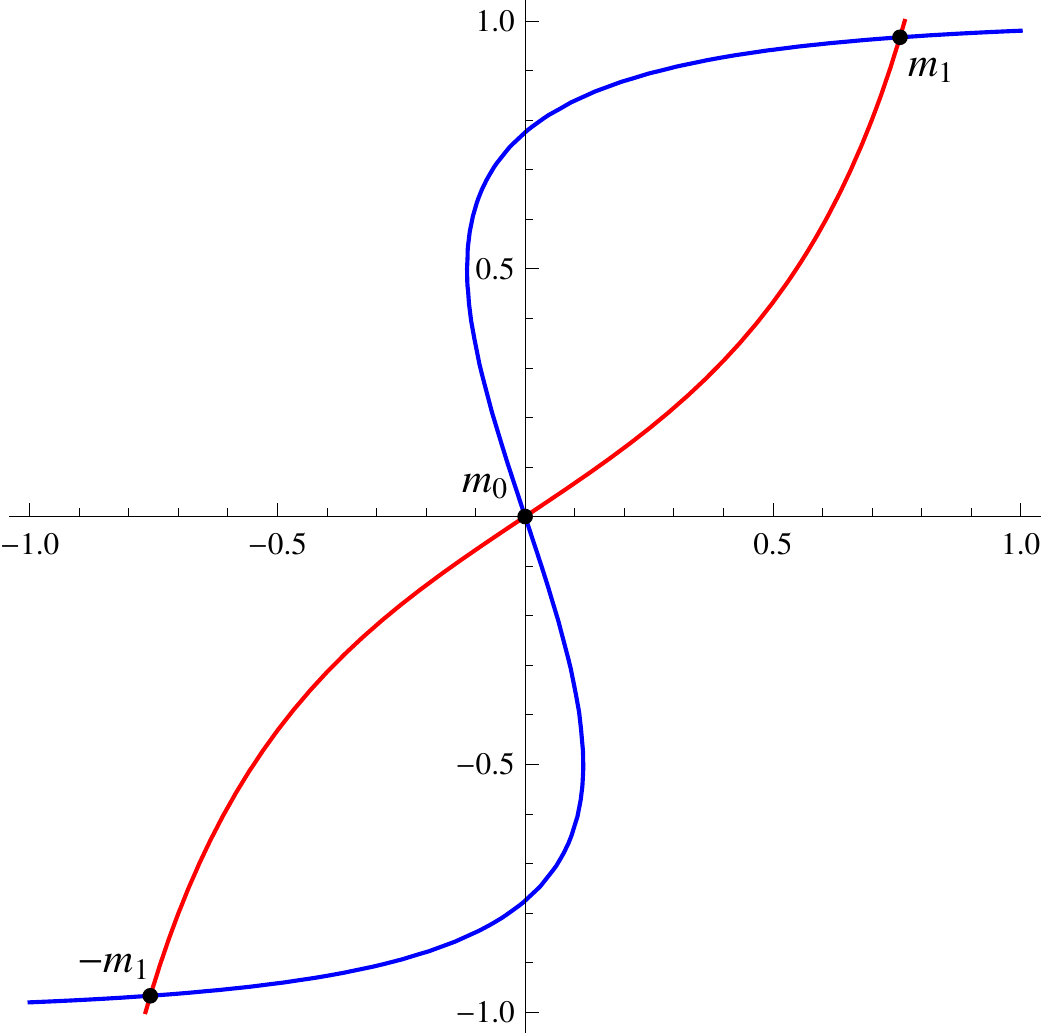} 
\includegraphics[ width=0.4\textwidth]{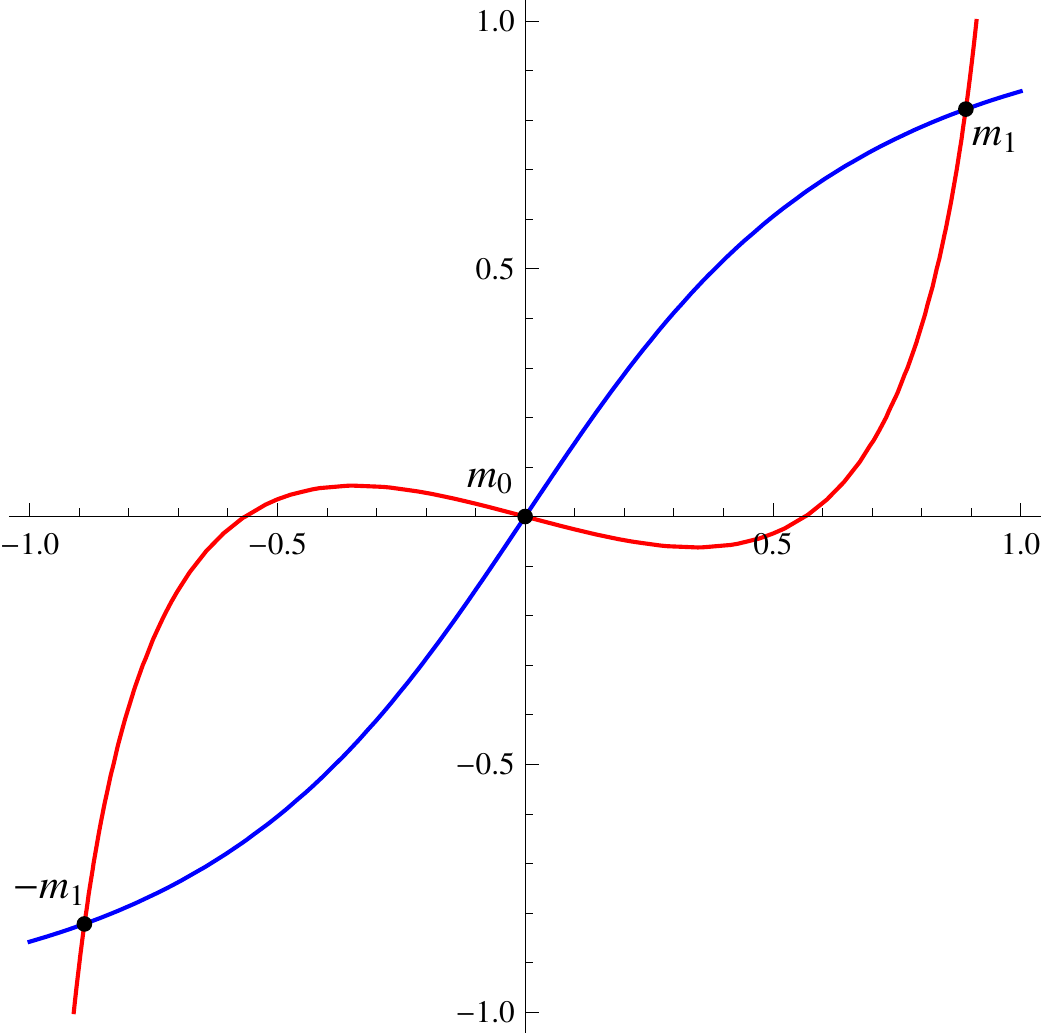} 
}
\centerline{
\subfigure[Case $\alpha J_{11} > 1$ and $(1-\alpha) J_{22} > 1$.]{
\includegraphics[width=0.4\textwidth]{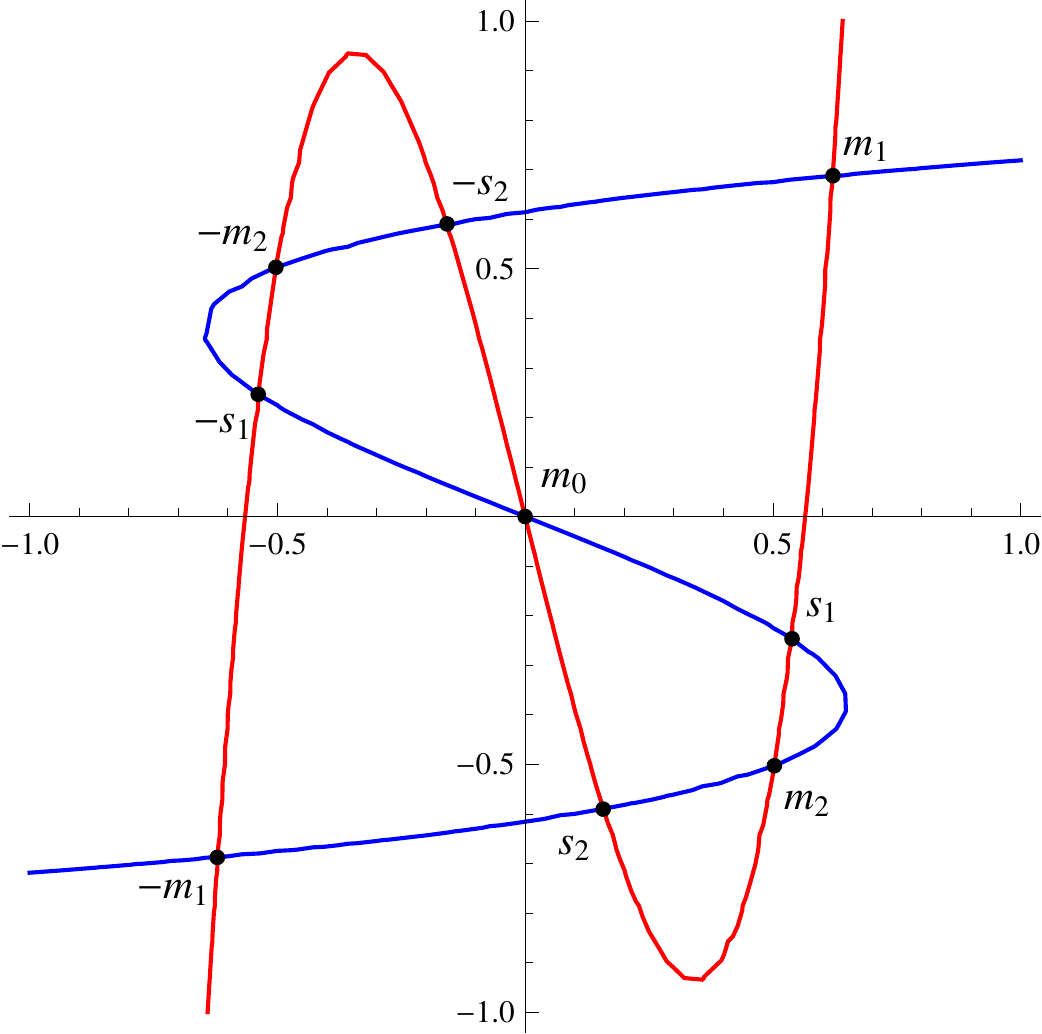} 
\includegraphics[ width=0.4\textwidth]{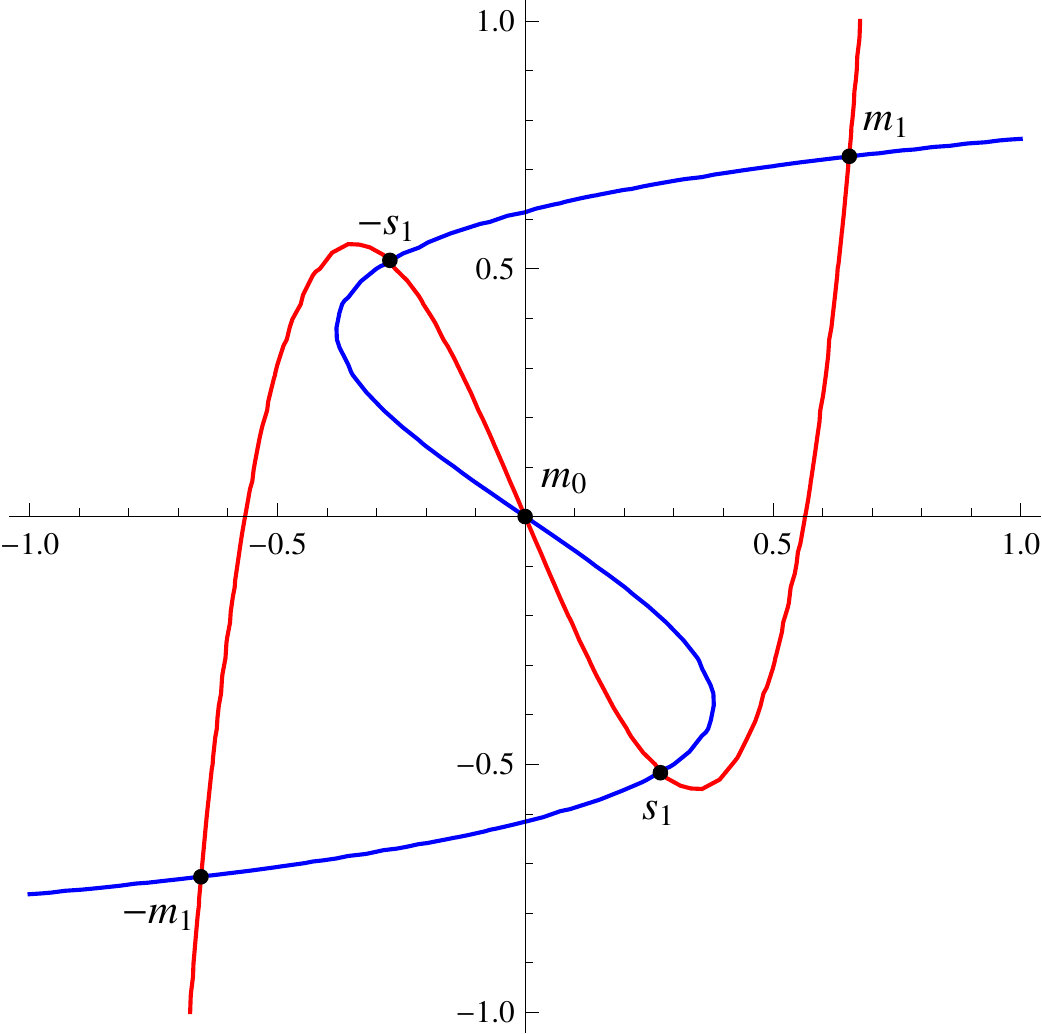} 
\includegraphics[ width=0.4\textwidth]{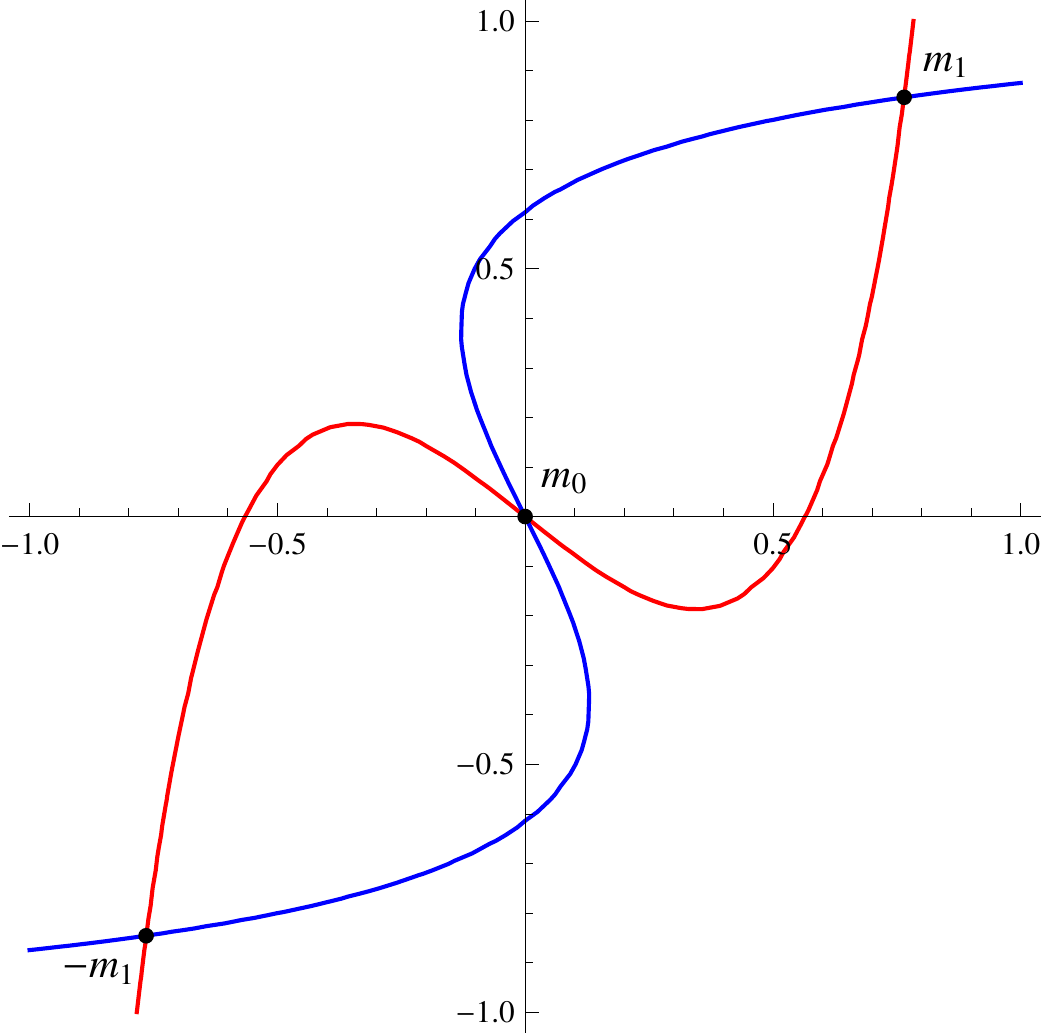} 
}}
\caption{Plot of the curves $\gamma_1$ ({\red \textbf{---}}) and $\gamma_2$ ({\blue \textbf{---}}) for different values of the parameters.}
\label{fig:3}
\end{figure}

Now we focus on the critical thresholds for the transitions \emph{a(i)--a(ii)}, \emph{c(i)--c(ii)} and \emph{c(ii)--c(iii)} in the statement of Theorem~\ref{thm:PhDia}. Before to continue we introduce the following notation: here and in the sequel of the proof, by abuse of notation, we will denote by $\gamma_2^{-1} (m_1^*)$ a local inverse of $\gamma_2(m_2^*)$; we intend $\gamma_2^{-1}$ to be defined over a suitable monotonicity interval that may change from case to case. 

\begin{description}[font=\normalfont]
\item[Transition in \emph{(a)}.] A simple convexity argument shows that to know the number of intersections between $\gamma_1$ and $\gamma_2$ we must control the slope of the two curves at the origin.   In particular, since $\gamma_1'(0) = \frac{1 - \alpha J_{11}}{(1-\alpha) J_{12}}$ and $\left(\gamma_2^{-1} \right)'(0) = \frac{\alpha J_{12}}{1-(1-\alpha) J_{22}}$, it yields
\begin{itemize}
\item if $\gamma_1'(0) \geq \left( \gamma_2^{-1} \right)'(0) \Leftrightarrow J_{12} \leq \sqrt{\frac{(1 - \alpha J_{11})(1-(1-\alpha) J_{22})}{\alpha (1-\alpha)}}$, there exists a unique intersection, which is the origin; 
\item if $\gamma_1'(0) < \left( \gamma_2^{-1} \right)'(0) \Leftrightarrow J_{12} > \sqrt{\frac{(1 - \alpha J_{11})(1-(1-\alpha) J_{22})}{\alpha (1-\alpha)}}$, beyond the origin, two further symmetric intersections $\pm \mathbf{m}^{(1)}$ arise.
\end{itemize}
\item[Transitions in \emph{(c)}.] Observe that $\gamma_1' (0)$ is an increasing function of $J_{12}$. On the contrary, $\left( \gamma_2^{-1} \right)'(0)$ is decreasing with respect to the same quantity. Therefore, by convexity again we can easily conclude that, for $J_{12} \geq J_{12}^{(c)}$, $\gamma_1$ and $\gamma_2$ must intersect three times. The transition \emph{c(ii)--c(iii)} remains thus proved.\\
We deal now with the passage \emph{c(i)--c(ii)}. For the sake of clarity and to be able to  refer to Figure~\ref{fig:1}, without loss of generality, we fix $(1-\alpha) J_{22} > 1$. When $J_{12} < J_{12}^{(c)}$, we may have five or nine intersections depending on the relative height of wells of the curves (monotonically tuned by the parameter $J_{12}$). In particular, the boundary between the two phases is determined by the tangency condition between $\gamma_1$ and $\gamma_2$. Indeed, $\widetilde{J}_{12}^{(c)}$ is the value of $J_{12}$ such that the following system is satisfied:
\[
\left\{
\begin{array}{l}
m_1^* = \tanh \left[ \alpha J_{11} m_1^* + (1-\alpha) J_{12} m_2^* \right] \\
m_2^* = \tanh \left[ \alpha J_{12} m_1^* + (1-\alpha) J_{22} m_2^* \right] \\
J_{12} = \sqrt{\dfrac{\left( \frac{1}{1- (m_1^*)^2} - \alpha J_{11} \right) \left( \frac{1}{1- (m_2^*)^2} - (1-\alpha) J_{22} \right)}{\alpha (1-\alpha)}} \,.
\end{array}
\right.
\]
Therefore, $\widetilde{J}_{12}^{(c)}$ is a continuous function of $\alpha J_{11}$, whose graph bifurcates from $\alpha J_{11} = 1$ and is defined over $]1,+\infty[$. This implies that, for every fixed value $\alpha J_{11}$, by increasing $J_{12}$ from $0$,  the system always undergoes the transitions from $9$ to $5$ and from $5$ to $3$ stationary solutions.
\end{description}

\emph{Linear stability.}  The matrix of the system linearized around a stationary solution $\mathbf{m}^*$ is given by
{\small
\[
DV \left( \mathbf{m}^* \right) =
\begin{pmatrix}
2 \left[ \alpha J_{11} \left[ 1-\left(m_1^* \right)^2 \right] -1\right] \cosh \left[ R_1 \left(  \mathbf{m}^* \right) \right] & 2 (1-\alpha) J_{12} \left( 1-\left(m_1^* \right)^2\right) \cosh \left[  R_1\left( \mathbf{m}^* \right) \right] \\
2 \alpha J_{12} \left( 1- \left( m_2^* \right)^2 \right) \cosh \left[  R_2\left(  \mathbf{m}^* \right) \right] & 2 \left[ (1-\alpha) J_{22} \left( 1- \left( m_2^* \right)^2 \right) -1 \right] \cosh \left[  R_2\left(  \mathbf{m}^* \right) \right]
\end{pmatrix} ,
\]
}%
with eigenvalues
{\small
\begin{multline*}
\lambda_{\pm} \left( \mathbf{m}^* \right) =  \left[ \alpha J_{11} \left( 1- \left(m_1^* \right)^2 \right) -1\right] \cosh \left[  R_1\left( \mathbf{m}^* \right) \right] + \left[ (1-\alpha) J_{22} \left( 1- \left( m_2^* \right)^2 \right) -1 \right] \cosh \left[  R_2\left( \mathbf{m}^* \right) \right] \\
\pm \bigg\{ \left\{ \left[ \alpha J_{11} \left( 1- \left(m_1^* \right)^2 \right) -1\right] \cosh \left[  R_1\left( \mathbf{m}^* \right) \right] - \left[ (1-\alpha) J_{22} \left( 1- \left( m_2^* \right)^2 \right) -1 \right] \cosh \left[  R_2\left( \mathbf{m}^* \right) \right] \right\}^2 \\
+ 4 \alpha (1-\alpha) J_{12}^2 \left( 1- \left(m_1^* \right)^2 \right) \left( 1- \left( m_2^* \right)^2 \right)  \cosh \left[  R_1\left( \mathbf{m}^* \right) \right] \cosh \left[  R_2\left( \mathbf{m}^* \right) \right] \bigg\}^{1/2} \,.
\end{multline*}
}%
Note that $\lambda_{\pm} \left( \mathbf{m}^* \right) \in \mathbb{R}$ for all values of the parameters and that clearly $\lambda_- \left( \mathbf{m}^* \right) < \lambda_+ \left( \mathbf{m}^* \right)$. Moreover, if $\alpha J_{11} \leq 1$ and $(1-\alpha) J_{22} \leq 1$ (or, analogously, $\alpha J_{11} > 1$ and $(1-\alpha) J_{22} > 1$), a direct substitution shows that for $J_{12} = J_{12}^{(c)}$ it holds $\lambda_{+} \left( \mathbf{m}^{(0)} \right) = 0$, meaning the eigenspace relative to $0$ is a neutral direction for the linearized system.\\
It remains to study the spectrum of $DV$ at the stationary points of equation \eqref{MKV2} in all other ranges of parameters. The analysis consists of standard straightforward manipulations. Therefore, for the sake of brevity, we skip the algebraic details giving only the key steps the procedure relies on. \\
First observe that, by using the same notation and convention we introduced in the previous paragraph, the eigenvalues $\lambda_{\pm} (\mathbf{m}^*)$ may be rewritten as
{\small
\begin{multline*}
\hspace{-0.3cm} \lambda_{\pm} \left( \mathbf{m}^* \right) =  - (1-\alpha) J_{12} \left( 1- \left(m_1^* \right)^2 \right) \gamma_1' \left(m_1^* \right) \cosh \left[  R_1\left( \mathbf{m}^* \right) \right] - \alpha J_{12} \left( 1- \left( m_2^* \right)^2 \right) \left[ \left( \gamma_2^{-1}\right)' \left( m_1^* \right) \right]^{-1} \cosh \left[  R_2\left( \mathbf{m}^* \right) \right] \\
\pm \Bigg\{ \left\{ -(1-\alpha) J_{12} \left( 1- \left(m_1^* \right)^2 \right) \gamma_1' \left(m_1^* \right) \cosh \left[  R_1\left( \mathbf{m}^* \right) \right] + \alpha J_{12} \left( 1- \left( m_2^* \right)^2 \right) \left[ \left( \gamma_2^{-1}\right)' \left( m_1^* \right) \right]^{-1} \cosh \left[  R_2\left( \mathbf{m}^* \right) \right] \right\}^2 \\
+ 4 \alpha (1-\alpha) J_{12}^2 \left( 1- \left(m_1^* \right)^2 \right) \left( 1- \left( m_2^* \right)^2 \right)  \cosh \left[  R_1\left( \mathbf{m}^* \right) \right] \cosh \left[  R_2\left( \mathbf{m}^* \right) \right] \Bigg\}^{1/2} \,.
\end{multline*}
}%
Thus, to determine their sign at $\mathbf{m}^*$ it is crucial the knowledge of the behavior of the curves $\gamma_1$ and $\gamma_2$ in a neighborhood of such critical point. In particular, if the equilibrium $\mathbf{m}^*$ is such that

\begin{itemize}
\item $\gamma_1'(m_1^*) > 0$, $\left( \gamma_2^{-1} \right)' (m_1^*) > 0$ and $\gamma_1'(m_1^*) > \left( \gamma_2^{-1} \right)' (m_1^*)$; then, we get $\lambda_{\pm} (\mathbf{m}^*) < 0$ and therefore $\mathbf{m}^*$ is linearly stable. 
\item $\gamma_1'(m_1^*) < 0$, $\left( \gamma_2^{-1} \right)' (m_1^*) < 0$ and $\gamma_1'(m_1^*) < \left( \gamma_2^{-1} \right)' (m_1^*)$; then, we get $\lambda_{\pm} (\mathbf{m}^*) > 0$ and therefore $\mathbf{m}^*$ is linearly unstable. 
\item $\gamma_1'(m_1^*) < 0$, $\left( \gamma_2^{-1} \right)' (m_1^*) > 0$ (or, conversely, $\gamma_1'(m_1^*) > 0$, $\left( \gamma_2^{-1} \right)' (m_1^*) < 0$); then, we get \mbox{$\lambda_{+} (\mathbf{m}^*) \lambda_{-} (\mathbf{m}^*) < 0$} and therefore $\mathbf{m}^*$ is a saddle for the linearized system. 
\end{itemize}
By checking the stationary solutions in the statement of Theorem~\ref{thm:PhDia}, labeled as in Figure~\ref{fig:3}, the conclusion of the proof follows.

\section{Conclusions}

In this paper we have investigated a simple multi-species interacting spin system whose intensive study has been motivated by strong and concrete applicability in social sciences. Unlike the existing literature, our work has proposed a dynamic version of the model. The main features are
\begin{itemize}
\item
Mean field type interaction: the particles lie on the complete graph and thus each of them interact with all the others.
\item
Presence of two types of spins: the particles are divided into two reference groups by differentiating the interactions between each other. In particular, we have
\begin{itemize}
\item two distinct \emph{intra-group interactions}, tuning the interaction strength between particles of the same population;
\item a \emph{inter-groups interaction}, controlling the influence between spins of different populations. 
\end{itemize} 
\item
Time-reversible Markovian microscopic dynamics.
\item
Order parameter: the pair of group magnetizations is a sufficient statistic for the model.
\end{itemize}
We have shown an asymptotic result in the limit as the number of particles goes to infinity. We have proved a law of large numbers, that describes the dynamics of a typical spin in the limit of an infinite size system. The large time behavior of this dynamics exhibits phase transition: depending on the parameters of the model, and possibly on the initial condition, states of populations may show different degrees of polarization - due to strongly influencing connections - or tend to a ``neutral'' configuration, when the coupling within sites is too weak. \\
An important and interesting further step would be to determine how macroscopic observables fluctuate around their mean values when the system is put out of or at the critical point. This issue is left for future research.

\section{Acknowledgments}

The author thanks Dr. Alessandra Bianchi and Prof. Pierluigi Contucci for valuable comments and fruitful discussions. This work was supported by the FIRB research grant RBFR10N90W. 

\bibliographystyle{alpha} 
\newcommand{\etalchar}[1]{$^{#1}$}


\end{document}